\def \transpose{^{\rm T}}
\def\ev{\operatorname{ev}}
\def\F{F}
\def\mff{\mathfrak f}
\def\Q{\mathbb Q}
\def\Tot{\operatorname{Tot}}
\def\Ring{\mathfrak R}
\def\R{\goth R}
\def\ts{\textstyle}
\def\e{\epsilon}
\def\blop{\Psi}
\def\la{\langle}
\def\ra{\rangle}
\def\B{\mathbb B}
\def\Ext{\operatorname{Ext}}
\def\kk{\pmb k}
\def\im{\operatorname{im}}
\def\w{\wedge}
\def\ann{\operatorname{ann}}
\def\p{\oplus}
\def\Hom{\operatorname{Hom}}
\def\m{\mathfrak m}
\def\M{\mathbb M}
\def\t{\otimes}
\def\w{\wedge}
\def\id{\operatorname{id}}
\def\HH{\operatorname{H}}
\def\pd{\operatorname{pd}}
\def\grade{\operatorname{grade}}
\def\a{\alpha}
\def\bw{\bigwedge}
\def\g{\gamma}
\def\rank{\operatorname{rank}}
\def\Brown{\operatorname{b}}
\def\Br{\Brown}
\def\gen{\operatorname{gen}}
\def\Gen{\operatorname{Gen}}
\newtheorem{theorem}{Theorem}[section]
\newtheorem{summary-no-advance}[equation]{Summary}
\newtheorem{observation}[theorem]{Observation}
\newtheorem{observation-no-advance}[equation]{Observation}
\newtheorem{theorem-no-advance}[equation]{Theorem}
\newtheorem{proposition}[theorem]{Proposition}
\newtheorem{corollary}[theorem]{Corollary}
\newtheorem{claim-no-advance}[equation]{Claim}
\newtheorem{partial conjecture}[theorem]{Partial Conjecture}
\newtheorem{hypothesis-no-advance}[equation]{Hypothesis}
\theoremstyle{definition}
\newtheorem{definition and remark}[theorem]{Definition and Remark}
\newtheorem{data}[theorem]{Data}
\newtheorem{notation}[theorem]{Notation}
\newtheorem{definitions and conventions}[theorem]{Definition and Conventions}
\newtheorem{definition}[theorem]{Definition}
\newtheorem{definition-no-advance}[equation]{Definition}
\newtheorem{chunk}[theorem]{}
\newtheorem{remark-no-advance}[equation]{Remark}
\newtheorem{chunk-no-advance}[equation]{}
\newtheorem{example}[theorem]{Example}
\newtheorem{subchunk}[equation]{}
\newtheorem{remark}[theorem]{Remark}
\newtheorem{Earlier Notes}[theorem]{Earlier Notes}
\newtheorem{goal-no-advance}[equation]{Goal}
\newtheorem{assumption-no-advance}[equation]{Assumption}
\newtheorem{scratch work}[theorem]{Scratch Work}
\newtheorem{proposed definition}[theorem]{Proposed Definition}
\newtheorem{marching orders}[theorem]{Marching Orders}
\numberwithin{equation}{theorem}
\begin{document}

\baselineskip=16pt

\title[Perfect modules with Betti numbers $(2,6,5,1)$]
{Perfect modules with Betti numbers $(2,6,5,1)$}

\date{\today}

\author[A.~R.~Kustin]{Andrew R.~Kustin}
\address{Andrew R.~Kustin\\ Department of Mathematics\\ University of South Carolina\\\newline 
Columbia\\ SC 29208\\ U.S.A.} \email{kustin@math.sc.edu}

\subjclass[2010]{13C40, 13D02, 13H10, 13D10}

\keywords{codimension three, Cohen-Macaulay, complete intersection, cotangent cohomology, linkage class, perfect ideal, rigid algebras, strongly unobstructed algebras.}

\thanks{}

\begin{abstract}  In 2018 Celikbas, Laxmi, Kra\'skiewicz, and Weyman exhibited an interesting family of perfect ideals of codimension
three, with five generators, of Cohen-Macaulay type two with trivial multiplication on the
Tor algebra. All previously known perfect ideals of codimension
three, with five generators, of Cohen-Macaulay type two had been found by Brown in 1987. Brown's ideals all have non-trivial multiplication on the Tor algebra.
We prove that all of the ideals of Brown are 
obtained from 
the ideals of   Celikbas, Laxmi, Kra\'skiewicz, and Weyman by (non-homogeneous) specialization.
We also prove that both families of ideals, when built using power series variables over a field,  define rigid algebras in the sense of Lichtenbaum and Schlessinger.\end{abstract}

\maketitle

\section{Introduction.}
The theorems of Hilbert-Burch and Buchsbaum-Eisenbud serve as inspirations and models for structure theorems about finite free resolutions in commutative algebra. The  
Hilbert-Burch Theorem \cite{Bu68} states that if $I$ is a perfect ideal of grade two in a commutative Noetherian local ring $R$, then $I$ is generated by the maximal order minors of an $n\times (n-1)$ matrix $X$ (for some integer $n$) and $I$ is presented by $X$. The Buchsbaum-Eisenbud Theorem \cite{BE77} 
 states that if $I$ is a perfect Gorenstein ideal of grade three in a commutative Noetherian local ring $R$, then $I$ is generated by the maximal order Pfaffians of an $n\times n$ alternating matrix $X$ (for some odd integer $n$) and $I$ is presented by $X$.

Let $R$ be a commutative ring and $F: 0\to F_n\to\dots\to F_0$ be an acyclic complex of free $R$-modules of finite rank. We write that $F$ has {\em Betti number  format} $b=(b_n,\dots,b_0)$ if $\rank F_i=b_i$ for each $i$. The pair $(R,F)$ is called a {\em generic pair} for the format  $b$ if, whenever  $S$ is a commutative Noetherian ring and $G$ is an acyclic complex of free $S$-modules of format  $b$,  then there exists a ring homomorphism $R\to S$ with $G$ isomorphic to $\F\t_{R} S$.

For each plausible collection of Betti numbers $b=(b_2,b_1,b_0)$ of length two, Hochster \cite[section 7]{H75} proved that there exists a 
generic pair $(R_{\Gen},
\F_{\Gen})$ for the  format $b$.
 Hochster also proved that for complexes of length two, $R_{\Gen}$ is a Noetherian, Cohen-Macaulay, normal, graded domain, which is finitely generated as an algebra over the ring of integers. 
 In the monograph \cite{H75} Hochster encouraged the commutative algebra community to find  generic pairs for other formats and to  identify the properties of the generic rings in these formats. He also made numerous conjectures.

Bruns \cite[Thm.~1]{Br84} proved that for every plausible Betti number format $b=(b_n,\dots,b_0)$, there is a generic pair $(R_{\Gen},F_{\Gen})$. Unfortunately, however, Bruns' generic rings are in general not Noetherian. This aspect of Bruns' work is rather disappointing. It shows that some of Hochster's conjectures about generic pairs are false, without shedding any real light on the structure of finite free resolutions.

Nonetheless, 
 Weyman  
has been studying resolutions of length three since the 1980's.  In \cite{W90} he associated a Lie algebra $\mathbb L$ to each length three Betti number  format $b$.  In \cite{W90} he  also proposed 
 a candidate for the generic pair corresponding to $b$ and he proved that the candidate is the generic pair provided certain homology groups involving modules over the enveloping algebra of $\mathbb L$ are zero. The recent paper \cite{W18} is a major tour de force. The bottom line 
 is that Weyman's candidate from 25 years earlier really is the generic pair for the Betti number format $b$ for complexes of length three. The process that Weyman used to reach this conclusion is fairly amazing. He associated a graph $T$ (consisting of vertices and edges forming a capital ``T'') to each  Betti number format. There is a Kac-Moody Lie algebra $\mathfrak g$ that corresponds to $T$ and $\mathbb L$ is a subalgebra of $\mathfrak g$. The critical complexes from \cite{W90} are parts of Bernstein-Gelfand-Gelfand complexes associated to $\mathfrak g$ and therefore the  
required homology groups are automatically zero and the candidate for the generic pair for each length 3 format $b$ which is proposed in \cite{W90} is indeed the generic pair for $b$. 

Of course, extra information can be gleaned from \cite{W18}. In particular, if $(R,F)$ is a generic pair for the length three Betti number format $b$, then $R$ is Noetherian if and only if the graph associated to $b$ is a Dynkin diagram. There aren't very many Dynkin diagrams that look like a capital $T$. (There are straight lines which, according to  \cite{CVW17}, 
correspond to Gorenstein ideals and almost complete intersection ideals; and there are the three exceptional Dynkin diagrams: $E_6$, $E_7$, and $E_8$.) 
These generic resolutions that correspond to Dynkin diagrams are of particular interest!

The present paper, as well as the papers \cite{Br87} and \cite{CKLW}, are about resolutions of perfect modules of Betti number format $(2,6,5,1)$. This format corresponds to the Dynkin diagram $E_6$.

Anne Brown \cite{Br87} studied perfect ideals $I$ of grade three in the ring $R$  in a local or homogeneous setting. If the Betti numbers in the minimal resolution of $R/I$ by free $R$-modules are
\begin{equation}\label{0}0\to R^2\to R^6\to R^5\to R\end{equation}
and at least one of the Koszul relations on the minimal generators of $I$ is a minimal relation on the generators of $I$, then Brown described all of the differentials in (\ref{0}).

The resolution in \cite{CKLW} is obtained using the generic ring technique of Weyman from \cite{W18}. In the resolution of \cite{CKLW} none of  the Koszul relations on the minimal generators of $I$ are minimal relations on the generators of $I$. A homogeneous (or local) specialization will not produce units where units did not previously appear. At the ``Structure of length 3 resolutions workshop'' at the University of California, San Diego in August 2019, Weyman asked if some non-homogeneous specialization of the most general  resolution from \cite{CKLW} is equal to the most general  resolution from \cite{Br87}.
We answer Weyman's question in the affirmative in Theorem~\ref{yes}.  

The resolution  of \cite{CKLW} only makes sense when two is a unit. The resolution $\Q$ that we construct in Definition~\ref{K} makes sense over any base ring; in particular, $\Q$ may be constructed over the ring of integers. 
Furthermore, $\Q$ is isomorphic to the resolution of \cite{CKLW} when two is a unit; see Theorem~\ref{K-to-CKLW}.

The recipe for creating the  resolution $\Q$ of Definition~\ref{K} is given (but was not used) in \cite{CKLW}. The resolution in \cite{CKLW} was obtained using methods from Geometry and  
Representation Theory,  
followed by a deformation.  Once the resolution was obtained, then the linkage history of the resulting ideal was found. Linkage is a symmetric operation. The authors  of \cite{CKLW} linked from the ideal with Betti number format $(2,6,5,1)$ to a complete intersection. We used their instructions to link from a complete intersection to obtain the resolution of the most general ideal that is studied in \cite{CKLW}. 
 We do not need to deform; indeed, the expression in our approach that corresponds to the deformation variable in \cite{CKLW} is fairly complicated  and this explains why our ultimate formulas are 
 less complicated than the formulas of \cite{CKLW}.

We used Brown's Theorem to produce our specialization from the resolution of \cite{CKLW} to the resolution of \cite{Br87}. As a first approximation, we set ``$y_{12}$'' equal to $1$ and made no other change; this step was proposed by Weyman. It is clear from the multiplicative structure on the resolution of \cite{CKLW} 
 that the zeroth homology of the resulting complex satisfies the hypothesis of Brown's Theorem and therefore can be resolved by Brown's resolution. 
We used invertible row and column operations to carefully maneuver the  resolution of \cite{CKLW}, with $y_{12}$ set equal to one, into Brown's form. Most of the variables involved in Brown's ideal
appear as linear forms in the middle differential. We were able to read the rest of  an appropriate specialization from this information. 

When the resolutions of \cite{Br87} and \cite{CKLW} are built as generally as possible, they are  candidates for generic resolutions. Thus, the zeroth homologies of these resolutions should be rigid in the sense that they can not be deformed in a non-trivial manner. We use linkage theory to verify this fact in Section~\ref{Rigidity}.

\section*{Acknowledgment}
This paper answers a question posed by Jerzy Weyman at 
 the ``Structure of length 3 resolutions workshop'' at the University of California, San Diego in August 2019. Weyman also proposed the first step toward an answer.
Preliminary versions of this calculation were made at the workshop in San Diego with Federico Galetto, Taylor Ball, and Thomas Grubb.

\tableofcontents

\section{Notation, conventions, and elementary results.}\label{Prelims}

\begin{chunk}
Unless otherwise noted, $R$ is a commutative Noetherian ring and all functors are functors of $R$-modules; that is, $\otimes$, $\operatorname{Hom}$, $(\underline{\phantom{X}})^*$, 
 and 
$\bigwedge^i$ mean
$\otimes_R$, $\operatorname{Hom}_R$, $\operatorname{Hom}_R(\underline{\phantom{X}},R)$,  
 and $\bigwedge^i_R$,  respectively.\end{chunk} 

\begin{subchunk}
A complex $\mathcal C: \cdots\to C_2\to C_1\to C_0\to 0$ of $R$-modules is called {\it acyclic} if $\HH_j(\mathcal C)=0$ for $1\le j$.\end{subchunk}

\begin{subchunk}We use 
``$\im$'' 
 as an abbreviation for ``image''. 
\end{subchunk}

\begin{subchunk}
\label{2.1.3}
 If $M$ is a matrix (or a homomorphism of free $R$-modules), then $I_r(M)$ is the ideal generated by the
$r\times r$ minors of $M$ (or any matrix representation of $M$). 
 If $M$ is a matrix, then $M\transpose$ is the transpose of $M$. If $M$ is a square matrix, then $\det M$ and $|M|$ both represent the determinant of $M$.   The square matrix $M=(m_{ij})$ is called an {\it alternating matrix} if $M+M\transpose=0$ and $m_{ii}=0$ for all $i$.\end{subchunk}

\begin{subchunk} 
If $I$ and $K$ are ideals in a ring $R$, then $K:I=\{r\in R|rI\subseteq K\}$.
\end{subchunk}

\subsection{Grade and perfection}
\begin{chunk}
Let $I$ be a proper ideal in a commutative Noetherian ring $R$.
The {\it grade} of   
$I$   
is the length of a maximal  $R$-regular sequence in $I$. (The unit ideal $R$  of $R$ is regarded as an ideal  of infinite grade.)
\end{chunk}

\begin{chunk}Let $M$ be a non-zero finitely generated module over a Noetherian ring $R$, 
$\ann(M)$ be the annihilator of $M$, and $\pd_RM$ be the projective dimension of $M$. 
If 
 $$\operatorname{grade}\ann (M)= \operatorname{pd}_R M,$$
then $M$ is called a {\it perfect $R$-module}.
\end{chunk}

\begin{chunk} Let $I$ be a proper ideal in a commutative Noetherian ring $R$.
The ideal $I$ is called a {\it perfect ideal} if $R/I$ is a perfect $R$-module.
The ideal $I$ is a {\it complete intersection} if $I$ is generated by an $R$-regular sequence.
   A perfect ideal $I$ of grade $g$ is a {\it Gorenstein ideal}
if
$\operatorname{Ext}^g_R(R/I,R)$ is a cyclic R-module. A perfect ideal $I$ of grade $g$ is an {\it almost complete intersection} if $I$ is not a complete intersection but $I$ can be generated by $g+1$ elements. 
The 
ideal $I$  is said to satisfy a property {\it generically} if $I_Q$
satisfies the property for every associated prime $Q$ of $R/I$.
\end{chunk}

\subsection{Multilinear algebra}
\begin{chunk}
\label{2.3} Let $V$ be a free module of finite rank $d$ over the commutative Noetherian ring 
$R$.  
Consider the evaluation map $\operatorname{ev}: V\otimes V^*\to R$ and let 
$$\operatorname{ev}^*:R\to V^*\otimes V$$ be the dual of $\operatorname{ev}$. Both of these $R$-module homomorphisms are completely independent of coordinates. We refer to $\ev^*(1)$ as the {\it canonical element} of $V^*\t V$. Notice that if $x_1,\dots,x_d$ and $x_1^*,\dots,x_d^*$ are dual bases for $V$ and $V^*$, respectively, then 
$$\sum_{i=1}^{d} x_i^*\t x_i$$
is the canonical element of $V^*\t V$.
In particular, 
 if
  $\chi_{V^*}$ is a basis  for $\bigwedge^{d}V^*$ and $\chi_V$ is the corresponding dual basis for $\bigwedge^{d}V$, then 
$\chi_{V^*}\otimes\chi_V$ is the canonical element of
$\bigwedge^{d}V^*\otimes\bigwedge^{d}V$. 
\end{chunk}

\begin{chunk} \label{87Not1} 
Our complexes are described in a coordinate-free manner. Let $V$ be a free module of finite rank $d$ over the commutative Noetherian ring 
$R$.  
We make much use of 
the fact that the exterior algebras
$\bigwedge^{\bullet} V$ and $\bigwedge^{\bullet} V^*$ are modules over one another. In particular, if $a_i\in \bw^iV$ and $\a_i\in \bw^iV^*$, then 
\begin{equation}\label{2.4.1} a_i(\a_i)=\a_i(a_i)\in R.\end{equation}
An $R$-module homomorphism $f:V\to V^*$ is an {\it alternating map} if $(f(a_1))(a_1)=0$ for all $a_1\in V$; furthermore, $f$ is an alternating map if and only if there is an element $\a_2\in \bw^2V^*$ such that $f(a_1)=a_1(\a_2)$ for all $a_1$ in $V$. \end{chunk}

The following   facts about the interaction of the module structures of 
$\bigwedge^\bullet V $ on  $\bigwedge^\bullet V^* $ and $\bigwedge^\bullet  V^* $ on  $\bigwedge^\bullet V $   
 are well known; see \cite[section 1]{BE75}, \cite[Appendix]{BE77},  and \cite[section 1]{Ku93}.

\begin{proposition} 
\label{A3} Let $V$ be a free module of rank $d$ over a commutative
Noetherian ring $R$ and let $b_r\in \textstyle \bigwedge^{r}V$, $c_p\in \textstyle \bigwedge^{p}V$, and $\alpha_q\in\textstyle \bigwedge^{q}(V^{*})$. 
\begin{enumerate}[\rm(a)]

\item\label{A3.a} If $r =1$, then 
$ (b_r(\alpha_q))(c_p)=b_r\wedge (\alpha_q(c_p))+(-1)^{1+q}\alpha_q(b_r\wedge c_p)$. 
\item\label{A3.b} If $q=d$, then
$(b_r(\alpha_q))(c_p)=(-1)^{(d -r)(d-p)}(c_p(\alpha_q))(b_r)$. 
\item\label{A3.c} If $p=d$, then $[b_r(\alpha_q)](c_p)= b_r\wedge \alpha_q(c_p)$.
 \item\label{A3.d} If $\Psi:V\to V'$ is a homomorphism of free $R$-modules and $\delta_{s+r}\in \textstyle \bigwedge^{s+r}({V'}^*)$, then 
$$(\textstyle \bigwedge^{s}\Psi^{*})[((\textstyle \bigwedge^{r}\Psi)(b_{r}))(\delta_{s+r})]=
b_{r}[(\textstyle \bigwedge^{s+r}\Psi^{*})(\delta_{s+r})]. \qed$$  \end{enumerate}\end{proposition}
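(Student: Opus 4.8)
The plan is to verify all four identities by passing to a fixed pair of dual bases: every operation that appears — the wedge product, the action of $\bw^\bullet V$ on $\bw^\bullet V^*$, the action of $\bw^\bullet V^*$ on $\bw^\bullet V$, and $\bw^i$ of a linear map — is $R$-multilinear, so it suffices to check each formula on basis elements. Fix dual bases $x_1,\dots,x_d$ of $V$ and $x_1^*,\dots,x_d^*$ of $V^*$, and for a subset $S=\{i_1<\dots<i_k\}$ of $\{1,\dots,d\}$ write $x_S=x_{i_1}\w\dots\w x_{i_k}$ and $x_S^*=x_{i_1}^*\w\dots\w x_{i_k}^*$; these monomials form bases of $\bw^k V$ and $\bw^k V^*$. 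The first step is to record, once and for all and with explicit signs, the contraction rule: $x_S^*(x_T)=0$ unless $S\subseteq T$, in which case $x_S^*(x_T)=(-1)^{N(S,T)}x_{T\setminus S}$, where $N(S,T)$ is the number of pairs $i\in S$, $j\in T\setminus S$ with $j<i$; together with the companion rule for the action of $\bw^\bullet V$ on $\bw^\bullet V^*$ and the sign relating $x_T(x_S^*)$ to $x_S^*(x_T)$ coming from \eqref{2.4.1}. After that, each part is a finite case check on these monomials.

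For part (a) I would take $b_1=x_j$, $c_p=x_T$, $\alpha_q=x_U^*$; both sides vanish unless a handful of incidence conditions among $j$, $T$, $U$ hold, and in the surviving cases the identity reduces to comparing the crossing number $N(\cdot,\cdot)$ before and after inserting $j$ into $T$. This is precisely the assertion that ``wedge on the left by $x_j$'' is an odd derivation for the contraction pairing, which is classical. Part (c) is the special case of the general contraction identity in which $c_p$ is specialized to a generator $\chi_V$ of $\bw^d V$: contraction against $\chi_V$ is the ``star'' isomorphism $\bw^q V^*\cong\bw^{d-q}V$, and $[b_r(\alpha_q)](\chi_V)=b_r\w\alpha_q(\chi_V)$ becomes a compatibility statement among this star map, contraction, and wedge, again verified on monomials. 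Part (b) is the companion statement in which $\alpha_q$ is a generator $\chi_{V^*}$ of $\bw^d V^*$: both sides are, up to sign, obtained by starring $b_r$ and $c_p$ and wedging them inside $\bw^d V$, and the factor $(-1)^{(d-r)(d-p)}$ is exactly the Koszul sign incurred when the two factors, of complementary degrees $d-r$ and $d-p$, are transposed.

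For part (d) I would reduce to $r=1$. Writing $b_r=y_1\w\dots\w y_r$ with $y_i\in V$ and using that $\bw^r\Psi$ is multiplicative on decomposables, so that $(\bw^r\Psi)(b_r)=\Psi y_1\w\dots\w\Psi y_r$, one peels off one factor at a time; the base case is the adjunction $(\Psi y)(\delta)=y(\Psi^*\delta)$ for $y\in V$, $\delta\in (V')^*$, which is immediate from the definition of $\Psi^*$, and the inductive step is handled by applying the derivation rule of part (a), on both $V$ and $V'$, to move wedge factors consistently through the contractions. Alternatively one can expand $\Psi$ in the chosen bases and check (d) directly on monomials, but the inductive route avoids a double sum.

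I expect the only real obstacle to be organizational rather than conceptual: pinning down a single consistent sign convention in the contraction rule — these conventions vary from source to source — and then carrying the crossing-number signs correctly through the several case distinctions in (a), (b), and (c). None of the four identities is deep; each holds already over $\Z$, and in fact all of them are contained in the references \cite{BE75}, \cite{BE77}, \cite{Ku93} cited above, so the substance of a self-contained write-up is just a uniform and careful treatment of these signs.
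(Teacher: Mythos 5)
Your plan is correct, but it is worth noting that the paper does not actually prove Proposition~\ref{A3} at all: the statement is recorded as a collection of well-known facts about the two module structures, with a \qed and a pointer to \cite[section 1]{BE75}, \cite[Appendix]{BE77}, and \cite[section 1]{Ku93}, which is essentially what you concede in your final paragraph. Your sketch is a sound way to make the proposition self-contained. Reduction to monomials $x_S$, $x_T^*$ with an explicit crossing-number sign handles (a)--(c), and the interpretations you give are the right ones: (a) is the statement that left multiplication by an element of $V$ is an odd derivation with respect to contraction, while (b) and (c) are compatibilities of contraction against a generator of $\bigwedge^{d}V^*$ (respectively $\bigwedge^{d}V$) with the wedge product, the sign $(-1)^{(d-r)(d-p)}$ in (b) being the Koszul sign for transposing factors of complementary degrees $d-r$ and $d-p$. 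The one place to be slightly careful is (d): the base case you actually need in the induction on $r$ is the statement for $r=1$ and arbitrary $s$, namely $(\bigwedge^{s}\Psi^*)[(\Psi y)(\delta_{s+1})]=y[(\bigwedge^{s+1}\Psi^*)(\delta_{s+1})]$, not merely the $s=0$ adjunction $(\Psi y)(\delta)=y(\Psi^*\delta)$; this stronger base case follows by evaluating both sides against an arbitrary $c_s\in\bigwedge^{s}V$ and using that $[(\Psi y)(\delta_{s+1})]\bigl((\bigwedge^{s}\Psi)(c_s)\bigr)=\pm\,\delta_{s+1}\bigl((\bigwedge^{s+1}\Psi)(y\wedge c_s)\bigr)$, or by the direct monomial check you mention as the alternative. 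With that adjustment the inductive step, peeling one wedge factor off a decomposable $b_r$ and using that the action of $\bigwedge^{\bullet}V$ on $\bigwedge^{\bullet}V^*$ is a module structure, goes through. What your route buys is independence from the cited sources and a single uniform sign convention; what the paper's route buys is brevity, since all four identities appear verbatim (up to convention) in the references.
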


\begin{example}Let $V$ be a free module of rank $d$ over a commutative
Noetherian ring $R$, $\sum x_i^*\t x_i$ be the canonical element of $V^*\t V$ in the sense of \ref{2.3}, $y_1,\dots,y_d$ be elements of $V$, and  $\Theta$ be an element of $\bw^dV^*$. Then
\begin{equation}(y_1\w\cdots\w y_d)(\Theta)\cdot \sum_i x_i^*\t x_i=\sum_i (-1)^{i+1} (y_1\w\dots\w\widehat{y_i}\w\dots\w y_d)(\Theta)\t y_i.\label{2.6.1}\end{equation}
\begin{proof}
Let $\theta$ be an arbitrary element of $V^*$. Observe that when $1\t \theta$ is applied to the left side of (\ref{2.6.1}) one obtains
 $$(y_1\w\cdots\w y_d)(\Theta)\cdot \sum_i \theta(x_i)\cdot x_i^* =(y_1\w\cdots\w y_d)(\Theta)\cdot  \theta$$
and when $1\t \theta$ is applied to the right side of (\ref{2.6.1}) one obtains
$$\big(\theta(y_1\w\dots\w y_d)\big)(\Theta)=(y_1\w\cdots\w y_d)(\Theta)\cdot  \theta.$$
The final equality is due to either (\ref{A3.a}) or 
(\ref{A3.c}) of Proposition~\ref{A3}.
\end{proof}
\end{example}   

\subsection{Linkage}
\begin{chunk}
\label{PLEASE WRITE ME}
If $I$ is a perfect ideal of grade $g$ in a commutative Noetherian ring $R$ and $K$ is a complete intersection ideal which is contained in $I$, then $K:I$ is a perfect ideal of grade $g$. Furthermore, if $\mathbb K$ is the Koszul complex which resolves $R/K$, $\mathbb F$ is a resolution of $R/I$ of length $g$, and $\a:\mathbb K\to \mathbb F$ is a map of complexes which extends the identity map in degree zero, then the dual of the mapping cone of $\a$ is a resolution of $R/(K:I)$. These results are due to Peskine and Szpiro (see \cite[Props.~1.3 and 2.6]{PS} or \cite
[Props.~5.1 and 5.1a]{BE77}). The results were originally stated under the hypothesis that $R$ is Gorenstein and local. Golod \cite{G80} proved that the results continue hold when  $R$ is an arbitrary commutative Noetherian ring.

The concept of perfection is particularly useful because of the   ``Persistence of Perfection Principle'', which is also known as  the ``transfer of perfection''; see \cite[Prop. 6.14]{H75} or \cite[Thm. 3.5]{BV}.

\begin{theorem-no-advance}
\label{PoPP}
Let $R\to S$ be a homomorphism of Noetherian rings, $M$ be a perfect   $R$-module,  and   $\mathbb P$ be a resolution of $M$ by projective $R$-modules. If $S\otimes_RM\neq 0$ and $$\grade (\ann M)\le \grade (\ann(S\otimes_RM)),$$ 
then $S\otimes_RM$ is a perfect $S$-module with $\pd_S(S\otimes_RM)=\pd_RM$ and  $S\otimes_R\mathbb P$ is a  resolution of $S\otimes_RM$ by projective $S$-modules.
\end{theorem-no-advance}
\end{chunk}

\begin{definition}
\label{Feb11}  Let $I$  be a perfect ideal in a commutative Noetherian ring $R$.   The {\it linkage class} 
 of $I$ is the set of all  ideals $J$ in $R$ which can be obtained from $I$ by a finite number of links. The ideal $I$ is  
 {\it licci} if $I$ is in the linkage class of a complete intersection.
\end{definition}

\subsection{Regular sequences}

\begin{proposition}
\label{7.3} 
Let $R$ be a domain, $(a_1,\dots,a_n)$ be a proper  ideal of $R$ of grade $g$, $c_1$ and $c_2$ be an integers with $c_1+c_2\le g$,  $X=(x_{ij})$ and $Y=(y_{ij})$ be  
$(n-c_1)\times c_1$ and $(n-c_1)\times c_2$, respectively,  
matrices of indeterminates, $S$ be the polynomial ring $S=R[\{x_{ij},y_{ij}\}]$, and $b$ be the product matrix
$$b=\bmatrix a_1&\cdots &a_n\endbmatrix \left[\begin{array}{c|c} X&Y\\\hline I_{c_1}&0_{c_1\times c_2}\end{array}\right],$$ where $I_{c_1}$ is the $c_1\times c_1$ identity matrix. Then the following statements hold. \begin{enumerate}[\rm(a)]
\item\label{7.3.a} If $c_1+c_2\le g-1$, then the entries of $b$ form a regular sequence on $S$ and generate a prime ideal of $S$.
\item\label{7.3.b} If $c_1+c_2\le g$,  then the entries of $b$ form a regular sequence on $S$.
\end{enumerate}
\end{proposition}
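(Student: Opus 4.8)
The plan is to prove both parts simultaneously by induction on $c_1$, reducing the general statement to the classical fact that a generic $R$-linear combination of the entries of a grade-$g$ ideal behaves like one member of a regular sequence. I would first treat the base case $c_1 = 0$: here $b = \begin{bmatrix} a_1 & \cdots & a_n \end{bmatrix} Y$ where $Y$ is a generic $(n)\times c_2$ matrix (note $n - c_1 = n$), so the entries of $b$ are $c_2$ generic $R$-linear combinations $b_j = \sum_i a_i y_{ij}$ of $a_1, \dots, a_n$. The standard lemma (essentially \cite{BV} or a direct argument via the fact that passing to the polynomial ring $R[y_{ij}]$ and inverting the $y_{ij}$ does not decrease grade) gives that $b_1, \dots, b_{c_2}$ is a regular sequence on $S$ as long as $c_2 \le g$, and that the quotient $S/(b_1,\dots,b_{c_2})$ again has a grade-$(g - c_2)$ ideal of the "same shape" so that we can iterate. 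For the primeness claim in part (a) when $c_2 \le g - 1$, one uses that $(a_1, \dots, a_n)$ is prime (which it is, in the base case of the induction, only if we start from a prime ideal — but here we only need primeness of the final ideal generated by the entries of $b$, and the genericity of the $y_{ij}$ forces it: $S/(b)$ is, generically, a polynomial extension of a localization of $R$, which is a domain).

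Next, for the inductive step on $c_1$, I would peel off the last column of the $X$-block. Write the final column of $\left[\begin{array}{c|c} X & Y \\ \hline I_{c_1} & 0 \end{array}\right]$ coming from the $c_1$-th column of $X$: it contributes the entry
\[
  \beta = a_{n} + \sum_{i=1}^{n - c_1} a_i x_{i c_1}
\]
after reindexing so that the "$1$" from $I_{c_1}$ sits in row $n$ (here I am being slightly schematic about which $a_k$ gets the coefficient $1$; the point is that exactly one $a_k$ appears with coefficient a unit, and the rest appear with distinct indeterminate coefficients $x_{i c_1}$). This $\beta$ is a "generic reduction": modulo $\beta$, we may solve for that $a_k$ in terms of the others, so $S/(\beta)$ is again a polynomial ring over (a ring closely related to) $R$, and the image of $(a_1, \dots, a_n)$ in $S/(\beta)$ has grade $g - 1$ by the genericity of the $x_{i c_1}$. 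The remaining entries of $b$ then form, modulo $\beta$, exactly a matrix product of the same form but with $c_1$ replaced by $c_1 - 1$ (and $n$ by $n - 1$, or with the number of remaining $a_i$ reduced by one), so the inductive hypothesis with parameters $(c_1 - 1, c_2)$ and grade $g - 1$ applies. Tracking the inequalities: $c_1 + c_2 \le g$ becomes $(c_1 - 1) + c_2 \le g - 1$, and $c_1 + c_2 \le g - 1$ becomes $(c_1 - 1) + c_2 \le g - 2$, so the hypotheses propagate correctly, and primeness is preserved because at each stage we are taking a polynomial extension of a domain.

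The main obstacle I anticipate is bookkeeping the grade after each reduction: I need to know that adjoining a single generic linear combination $\beta$ of the $a_i$'s and killing it drops the grade of the residual ideal by exactly one (not zero, not more than one), and this requires an honest argument that the $a_i$ remain "generically independent" — concretely, that $\operatorname{grade}$ of the image of $(a_1, \dots, a_n)$ in $S/(\beta)$ equals $g - 1$. This is where the hypothesis that $R$ is a \emph{domain} is used, together with the Persistence of Perfection Principle (Theorem~\ref{PoPP}) to transfer grade along the flat maps $R \to S$ and the localizations that invert the leading indeterminate coefficients; one also invokes that grade can only drop by at most one when passing to a principal quotient, and that it does drop because $\beta$ is a nonzerodivisor modulo any proper subideal in the relevant localization. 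A clean way to package this is: localize $S$ at the multiplicatively closed set generated by a suitable $x_{i c_1}$, observe the localization is a polynomial ring over $R$ in which $\beta$ is (after the change of variables $a_k \mapsto a_k$) literally one of the variables or an obvious regular element, conclude, and then descend the regular-sequence statement back to $S$ itself since grade is detected after flat base change and localization. Once this single-step grade computation is in hand, the induction runs routinely, and part (b) is just the $c_1 + c_2 = g$ endpoint where we no longer claim primeness.
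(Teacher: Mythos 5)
Your overall skeleton is the same as the paper's: handle the generic $Y$-columns by the classical Hochster--Eagon results \cite{HE}, and induct on $c_1$ by killing the single entry $\beta=a_n+\sum_{i=1}^{n-c_1}a_ix_{ic_1}$, so that $n$, $g$, and $c_1$ all drop by one. The gap is in the one-column step, which is exactly where all the content lies. First, ``modulo $\beta$ we may solve for that $a_k$'' is not meaningful: $a_k$ is an element of $R$, not an indeterminate, so $S/(\beta)$ is not a polynomial ring over (a localization or quotient of) $R$ in any evident way. Second, your ``clean packaging'' -- localize at a suitable $x_{ic_1}$ and make $\beta$ a coordinate after a change of variables -- cannot work: every coefficient of $\beta$ in the $x$-variables is some $a_i$, a non-unit, so inverting an indeterminate never turns $\beta$ into a variable. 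The element becomes a coordinate only after inverting one of the $a_i$, and then the genuine difficulty is descending the conclusions from $S[1/a_i]$ back to $S$; ``grade is detected after flat base change and localization'' is not a descent principle for primeness or for regular sequences (localization is not faithfully flat), and this descent is precisely where the hypothesis $\mathrm{grade}\ge 2$ must enter. Third, and most importantly, to run your induction you must know that the new base ring $R[x_{1c_1},\dots,x_{n-c_1,c_1}]/(\beta)$ is a \emph{domain}, since the proposition you are invoking inductively assumes a domain; your justification, ``primeness is preserved because at each stage we are taking a polynomial extension of a domain,'' assumes exactly what has to be proved. (The regularity of $\beta$ itself is cheap -- $S$ is a domain, so one only needs $\beta\neq 0$ -- and the statement that the image of the ideal in $S/(\beta)$ has grade at least $g-1$ is standard once $\beta$ is a regular element of the ideal; the primeness of $(\beta)$ when $2\le g$ is the nontrivial point.) The appeal to the Persistence of Perfection Principle is also off-target: it concerns perfection and the transfer of resolutions, not this grade bookkeeping.

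The paper fills this hole with its $c_1=1$ base case: it applies \cite[Props.~21 and 22]{HE} to the fully generic combination $\sum_i a_iy_i$ in $S_1=R[y_1,\dots,y_n]$, inverts $y_n$ (legitimate because $y_n\notin(\sum_i a_iy_i)$), and identifies
$$S_1[\tfrac 1{y_n}]\Big/\Big(\sum_i a_iy_i\Big)\ \cong\ \frac{R[\tfrac{y_1}{y_n},\dots,\tfrac{y_{n-1}}{y_n}]}{\big(\sum_{i=1}^{n-1}a_i\tfrac{y_i}{y_n}+a_n\big)}\Big[y_n,\tfrac 1{y_n}\Big],$$
from which regularity and (when $2\le g$) primeness of the affine element $a_n+\sum_i a_ix_i$ follow by a change of variables; the induction on $c_1$ then proceeds as you intend, with primeness at each stage guaranteeing that the next base ring is a domain. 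If you replace your localization-at-$x_{ic_1}$ argument with this homogenize-and-invert-$y_n$ reduction to the generic case (or an equivalent argument inverting the $a_i$ together with an honest contraction statement $\beta S[1/a_i]\cap S=\beta S$), your proposal becomes essentially the paper's proof.
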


\begin{proof} Let $I$ be the ideal $(a_1,\dots,a_n)R$. 
If $c_1=0$, then the assertion is \cite[Props.~21 and 22]{HE}.
Furthermore, once one proves the result for an arbitrary $c_1$, with $0\le c_1\le g$, then one may apply \cite[Props.~21 and 22]{HE} to the ideal 
$(a_1,\dots,a_n)$ in the ring $$\frac R{I_1\left(\bmatrix a_1&\cdots &a_n\endbmatrix \bmatrix  X\\\hline I_{c_1}\endbmatrix\right)}$$in order to draw the stated conclusion because 
this ideal is generated by $$(a_1,\dots, a_{n-c_1})$$ and has grade $g-c_1$. 
Consequently, it suffices to prove the result for $c_2=0$. 

Assume, henceforth, that $c_2=0$.
The proof is by induction on $c_1$.
We first treat the case $c_1=1$. 
Let $y_1,\dots,y_n$ be indeterminates and $S_1$ be the polynomial ring $S_1=R[y_1,\dots,y_n]$.
Apply
 \cite[Props.~21 and 22]{HE} to see that 
$$\begin{cases}
\text{
$\sum\limits_i a_i y_i$ is a regular element of $S_1$,}&\text{if $1\le g$, and}\\[10pt] \text{$\sum\limits_i a_i y_i$ generates a prime ideal of $S_1$,}&\text{if $1\le g-1$}.\end{cases}$$ The element $y_n$ of $S_1$ is not an element of the ideal of $S_1$ generated by $\sum_i a_i y_i$, because $I$ is a proper ideal of $I$. It follows that 
$$\begin{cases}
\text{$\sum\limits_i a_i y_i$ is a regular element of $S_1[\frac 1{y_n}]$,}&\text{if $1\le g$, and}\\[10pt]
\text{$\sum\limits_i a_i y_i$ generates a prime ideal of $S_1[\frac 1{y_n}]$},&\text{if $1\le g-1$.}\end{cases}$$ On the other hand,
$$\frac{S_1[\frac 1{y_n}]}{(\sum_i a_i y_i)}=
\frac{R[\frac{y_1}{y_n},\dots,\frac{y_{n-1}}{y_n}]}
{\left(\sum\limits_{i=1}^{n-1} a_i \frac {y_i}{y_{n}}+ a_n\right)}\left[y_n
,\frac 1{y_n}\right].$$
We conclude that 
$$\begin{cases}
\text{$\sum\limits_{i=1}^{n-1} a_i \frac {y_i}{y_{n}}+ a_n$ 
is a regular element of $R[\frac{y_1}{y_n},\dots,\frac{y_{n-1}}{y_n}]$,
}
&\text{if $1\le g$, and }\\[10pt]
\text{$\sum\limits _{i=1}^{n-1} a_i \frac {y_i}{y_{n}}+ a_n$ generates a prime ideal in  $R[\frac{y_1}{y_n},\dots,\frac{y_{n-1}}{y_n}]$,}&\text{if $1\le g-1$.}\\
\end{cases}$$
Apply a change of variables to obtain the assertion when $c_1=1$.

Notice that the ideal $$\ts I\frac{R[y_{1c_1},\dots y_{n-1,c_1}]}{\left(\sum\limits_{i=1}^{n-1} a_i y_{ic_1}+ a_n\right)}$$ is generated by $(a_1,\dots,a_{n-1})$; consequently, when one 
considers this ideal, the parameters $c_1$, $g$, and $n$ all have decreased by one. Induction yields that  
\begin{equation}\sum\limits_{i=1}^{n-c_1}a_ix_{i1}+a_{n-c_1+1},\quad \dots,\quad  \sum\limits_{i=1}^{n-c_1}a_ix_{i,c_1-1}+a_{n-1},\quad \sum\limits_{i=1}^{n-1} a_i y_{ic_1}+ a_n\label{7.3.1}\end{equation} 
$$\begin{cases}
\text{is a regular sequence of $R[\{x_{ij}\},\{y_{ic_1}\}]$,}&\text{if $c_1\le g$, and}\\ 
\text{generates a prime ideal of $R[\{x_{ij}\},\{y_{ic_1}\}]$},&\text{if $c_1\le g-1$.}
\end{cases}$$
One can add the appropriate linear combination of the first $c_1-1$ entries of (\ref{7.3.1}) to the last entry of (\ref{7.3.1}) to remove $y_{n-c_1+1,c_1},\dots,y_{n-1,c_1}$ from the last entry of (\ref{7.3.1}), without changing the ideal generated by the entries of (\ref{7.3.1}), at the expense of complicating the multipliers of $a_1,\dots,a_{n-c_1}$ in the last entry of (\ref{7.3.1}). One further change of variables produces the assertion of the proposition on the nose.
\end{proof}

\begin{observation}
\label{7.4}
 Let $R$ be a ring, $f$ be a polynomial in $R[x]$, and $r$ be an element of $R$. If $f(r)$ is a regular element of $R$, then $f$ is a regular element of $R[x]$. \end{observation}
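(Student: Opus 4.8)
The plan is to reduce to the case $r=0$ by a change of variable and then inspect the lowest-degree coefficient of a putative zero-divisor.

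First I would pass to the $R$-algebra automorphism $\phi\colon R[x]\to R[x]$ determined by $\phi(x)=x+r$, whose inverse sends $x$ to $x-r$. Any ring automorphism carries non-zero-divisors to non-zero-divisors and nonzero elements to nonzero elements, so $f$ is a regular element of $R[x]$ if and only if $\phi(f)=f(x+r)$ is. Evaluating $f(x+r)$ at $x=0$ returns $f(r)$, so after replacing $f$ by $f(x+r)$ I may assume $r=0$; then the constant term $a_0$ of $f=\sum_{i\ge 0}a_ix^i$ equals $f(0)=f(r)$ and hence is a regular element of $R$.

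Next I would take any $g=\sum_{k\ge 0}b_kx^k\in R[x]$ with $fg=0$ and, assuming for contradiction that $g\ne 0$, let $i$ be the least index with $b_i\ne 0$. The coefficient of $x^i$ in the product $fg$ is $\sum_{j+k=i}a_jb_k$, which collapses to $a_0b_i$ because $b_k=0$ for every $k<i$; from $fg=0$ we get $a_0b_i=0$, and regularity of $a_0$ forces $b_i=0$, a contradiction. Hence $g=0$, and $f$ is a regular element of $R[x]$. (Equivalently one may induct on $k$ to show $b_0=b_1=\cdots=0$ in turn, using $a_0b_k=-(a_1b_{k-1}+\cdots+a_kb_0)=0$.)

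The hard part is essentially nonexistent: the only point needing a moment's care is that the translation $x\mapsto x+r$ genuinely preserves the property of being a non-zero-divisor, which is immediate because $\phi$ is an automorphism, after which the coefficient computation is entirely routine.
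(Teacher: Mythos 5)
Your proof is correct, and its first move is exactly the paper's: rewrite $f$ in powers of $x-r$ (equivalently, apply the automorphism $x\mapsto x+r$) so that the constant term becomes $f(r)$. Where you diverge is the finish. The paper closes by citing Nagata's form of McCoy's theorem, that a zero-divisor in $R[x]$ has coefficients generating an ideal of grade zero, which is contradicted because the regular element $f(r)$ lies among those coefficients. You instead argue directly: if $fg=0$ with $g\neq 0$ and $b_i$ is the lowest nonzero coefficient of $g$, then the coefficient of $x^i$ in $fg$ collapses to $a_0b_i$, so regularity of $a_0=f(r)$ forces $b_i=0$, a contradiction. Your version is self-contained and in fact uses less than the cited result -- it only needs the constant coefficient to be regular, a special case in which the lowest-coefficient computation is trivial -- whereas the paper trades that small computation for a one-line appeal to the literature. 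Both are complete; yours buys independence from the reference, the paper's buys brevity.
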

\begin{proof}
 Write $f$ as a polynomial in the symbol $x-r$; in particular, the constant term of $f$ in this expression is $f(r)$. Recall that if a polynomial is a zero-divisor, then its coefficients generate an ideal of grade zero; see, for example, \cite[6.13 on page 17]{Na}. 
\end{proof}

\begin{example}
\label{2.12}
If $(a_1,a_2,a_3,a_4)$ is a proper ideal of grade at least three in a domain $R$ and $x_1,x_2,x_3,x_4$ are indeterminates, then the entries of the product matrix 
$$\bmatrix a_1&a_2&a_3&a_4\endbmatrix \bmatrix x_1&x_2&x_3\\1&0&0\\0&1&0\\0&0&x_4\endbmatrix$$ form a regular sequence in the polynomial ring $R[x_1,x_2,x_3,x_4]$. \end{example}
\begin{proof} Apply Proposition~\ref{7.3} to see that the entries of
$$\bmatrix a_1&a_2&a_3&a_4\endbmatrix \bmatrix x_1&x_2&x_3\\1&0&0\\0&1&0\\0&0&1\endbmatrix$$ form a regular sequence in $R[x_1x_2,x_3]$. The assertion now follows from Observation~\ref{7.4}.\end{proof}

\section{The complex $\Q$.}
A coordinate-free description of $\Q$ is given in Definition~\ref{K}. A matrix version of $\Q$ may be found in \ref{Q-matrix}.
\begin{data}
\label{3.1} Let $R$ be a commutative Noetherian ring, $F$ and $G$ be free $R$-modules with $\rank F=3$ and $\rank G=2$, $\phi:F\to G$ and $\ell:F^*\to G$ be  $R$-module homomorphisms, $\psi\in\bw^2F^*$, $\zeta\in \bw^2G$, and $z_2\in R$. Let $p:\bw^2G\t \bw^3F^*\to R$ be a fixed isomorphism. 
\end{data}

\begin{notation}
\label{3.2} Adopt the data of \ref{3.1}.
We use $g_i$, $\g_i$, $f_i$, and $\mff_i$ for arbitrary elements of $\bw^iG$, $\bw^iG^*$, $\bw^i F$, and $\bw^iF^*$, respectively. We use $\chi_F\t \chi_{F^*}$ and $\chi_G\t \chi_{G^*}$ for the canonical elements of $\bw^3F\t \bw^3F^*$ and $\bw^2G\t \bw^2G^*$, respectively; see \ref{2.3}. Let $\sum_{i=1}^3 E_i\t E_i^*$ be the canonical element of $F\t F^*$. Notice that every expression which involves $\chi_F$ must also involve  $\chi_{F^*}$. The analogous statement is also in effect for each of the objects $\chi_{F^*}$, $\chi_G$, and  $\chi_{G^*}$.  
\end{notation}

\begin{definition}
\label{K} 
 Adopt Data {\rm\ref{3.1}} and Notation~{\rm\ref{3.2}}.
Let $\Q$ be the following collection of $R$-module homomorphisms: 
$$0\to Q_3\xrightarrow{q_3}Q_2 \xrightarrow{q_2}Q_1\xrightarrow{q_1}Q_0,$$
where
\begingroup\allowdisplaybreaks
\begin{align*}\ts &\ts Q_3=G^*,\quad  Q_2=\bw^3F^*\p G\p F^*, \quad Q_1=(\bw^2G\t \bw^3F^*)\p\bw^3F^*\p\bw^2F,\\
 &\ts Q_0=\bw^3F,\\
&q_3(\g_1)=\bmatrix \phi^*(\g_1)\w \psi\\
(\phi\circ \ell^*)(\g_1)-\g_1(\zeta)\\
[\ell^*(\g_1)](\psi)+z_2\cdot \phi^*(\g_1)\endbmatrix,\\
&q_2\left(\bmatrix \mff_3\\0\\0\endbmatrix\right)=
\bmatrix
\zeta\t \mff_3
\\\hline
-z_2\cdot \mff_3  \\\hline
-p(\chi_G\t \mff_3)\cdot (\bw^2 \ell^*)(\chi_{G^*}) 
\\
\endbmatrix,
\\&q_2\left(\bmatrix 0\\g_1\\0\endbmatrix\right)=
\bmatrix
-[g_1\w  \phi(\psi(\chi_F))]\t \chi_{F^*}
\\\hline
0 \\\hline
-p(\chi_G\t \chi_{F^*})\cdot
[\ell^*(g_1(\chi_{G^*}))]\w\psi(\chi_F)
\\
-z_2\cdot p(\chi_G\t \chi_{F^*})\cdot [\phi^*(g_1(\chi_{G^*}))](\chi_F)
\endbmatrix, \\
&q_2\left(\bmatrix 0\\0\\\mff_1\endbmatrix\right)=
\bmatrix
+\chi_G\t \big(\mff_1\w [(\bw^2\phi^*)(\chi_{G^*})]\big)
\\\hline
+(\mff_1\w\psi)  \\\hline
-p\big (\sum_i[\phi(E_i)\w \ell (E_i^*)]
\t \chi_{F^*}\big)\cdot \mff_1(\chi_F)
\\
+ p(\chi_G\t \chi_{F^*})\cdot 
\Big[\phi^*\Big([\ell(\mff_1)](\chi_{G^*})\Big)\Big](\chi_F)
\\
-  p(\zeta\t \chi_{F^*})\cdot  \mff_1(\chi_F) 
\endbmatrix,\\
&q_1\left(\bmatrix
g_2\t \mff_3\\ 0\\0
 \endbmatrix\right)
=\begin{cases}
-p(g_2\t \mff_3)\cdot p((\bw^2\ell)(\psi)\t \chi_{F^*}) \cdot \chi_F\\
-z_2\cdot p(g_2\t \mff_3)\cdot p\left(
\sum_i[\phi(E_i)\w \ell (E_i^*)]\t \chi_{F^*})\right)\cdot \chi_F \\
-z_2\cdot p(g_2\t \mff_3)\cdot p(\zeta\t \chi_{F^*})\cdot \chi_F,
\end{cases}\\
&q_1\left(\bmatrix
0\\ \mff_3\\0
 \endbmatrix\right)
=\begin{cases}
+p(\chi_G\t \mff_3)\cdot p\Big([\ts\bw^2(\phi\circ \ell^*)]
(\chi_{G^*})\t \chi_{F^*}\Big)
\cdot \chi_F\\
-p(\zeta\t \chi_{F^*})\cdot p\left(\sum_i[\phi(E_i)\w \ell(E_i^*)]\t \mff_3\right) \cdot \chi_{F}\\
- p(\zeta\t \chi_{F^*})
\cdot p(\zeta\t \mff_3)\cdot \chi_F,
\end{cases}\\\intertext{and}
&q_1\left(\bmatrix
0\\ 0\\f_2
 \endbmatrix\right)
=
\begin{cases}
+p(\chi_G\t \chi_{F^*})\cdot f_2\w \ell^*\Big([\phi(\psi(\chi_F))](\chi_{G^*})\Big)\\
-p(\chi_G\t \chi_{F^*})\cdot 
z_2\cdot f_2\w [(\ts\bw^2\phi^*)(\chi_{G^*})](\chi_F)\\
-p(\chi_G\t \chi_{F^*})\cdot \zeta(\chi_{G^*})\cdot f_2\w \psi(\chi_F).\end{cases}
\end{align*}\endgroup
\end{definition}

\begin{remark}We have chosen to name an isomorphism $p:\bw^2G\t \bw^3F^*\to R$; do everything in a coordinate-free manner; and not name a preferred basis for any module. An alternate approach would involve naming preferred bases for $\bw^2G$ and $\bw^3F^*$ and then recording the maps of $\Q$ in terms of the preferred bases, when necessary. These two approaches are essentially equivalent; indeed, one can insist that 
$p(\chi_G\t \chi_{F^*})=1$ provided one removes the last two sentences of Notation~\ref{3.2} and one holds $\chi_G$, $\chi_{G^*}$, $\chi_{F^*}$, and $\chi_F$ fixed.  
\end{remark}

\begin{theorem}
\label{Q acyclic} The maps and modules $\Q$ 
 of Definition~{\rm\ref{K}} form a complex{\rm;} furthermore, if the ideal $\im q_1$ of $R$ has grade at least three, then $\Q$ is acyclic. \end{theorem}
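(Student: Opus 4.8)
The statement has two parts, which I would treat separately. For the first --- that $\Q$ is a complex --- I would simply verify $q_{1}\circ q_{2}=0$ and $q_{2}\circ q_{3}=0$, by substituting the generators of $Q_{2}$ and of $Q_{3}$ displayed in Definition~\ref{K} into the two composites and then reducing the resulting elements of $\bw^{\bullet}F\otimes\bw^{\bullet}F^{*}$ and of $\bw^{\bullet}G$ by means of the contraction identities of Proposition~\ref{A3} and the identity~(\ref{2.6.1}). This is lengthy but routine; the only care needed is with signs and with the bookkeeping attached to the isomorphism $p$.

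For acyclicity the plan is to reduce to a single universal instance of the construction and then invoke linkage. Let $R_{\mathrm{gen}}$ be the polynomial ring over $\Z$ whose indeterminates are the entries --- relative to fixed bases of $F$ and $G$ --- of $\phi$, $\ell$, $\psi$, $\zeta$, and $z_{2}$, and let $\Q_{\mathrm{gen}}$ be the corresponding instance of the complex of Definition~\ref{K}. Any choice of data as in~\ref{3.1} is the image of the universal data under a unique ring homomorphism $R_{\mathrm{gen}}\to R$, and under this homomorphism $\Q=R\otimes_{R_{\mathrm{gen}}}\Q_{\mathrm{gen}}$, while $\operatorname{coker}(q_{1})=R\otimes_{R_{\mathrm{gen}}}\operatorname{coker}(q_{1}^{\mathrm{gen}})=R/\im q_{1}$. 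Hence it is enough to prove that $\operatorname{coker}(q_{1}^{\mathrm{gen}})$ is a perfect $R_{\mathrm{gen}}$-module of projective dimension three resolved by $\Q_{\mathrm{gen}}$: granting this, $\grade(\im q_{1}^{\mathrm{gen}})=3$, so whenever $\im q_{1}$ is a proper ideal of $R$ of grade at least three, the Persistence of Perfection Principle (Theorem~\ref{PoPP}) shows that $\Q$ resolves $R/\im q_{1}$, which is precisely the assertion.

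To settle the universal instance I would carry out the linkage recipe indicated in the introduction. Inside $R_{\mathrm{gen}}$ --- enlarged, if necessary, by finitely many further indeterminates --- one produces a grade-three complete intersection ideal $K$ contained in a perfect grade-three ideal $I$, where the generators of $K$ are the entries of a suitable product matrix, so that Example~\ref{2.12} and Proposition~\ref{7.3} certify that those generators form a regular sequence; where $I$ carries an explicit free resolution $\mathbb F$ of length three; and where $\alpha\colon\mathbb K\to\mathbb F$ is a comparison map lifting the identity in degree zero out of the Koszul complex $\mathbb K$ on $K$. By the Peskine--Szpiro linkage theorem (see~\ref{PLEASE WRITE ME}, and \cite{PS}, \cite{G80}), the dual of the mapping cone of $\alpha$ is a free resolution of $R_{\mathrm{gen}}/(K:I)$, and $R_{\mathrm{gen}}/(K:I)$ is a perfect module of grade three. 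The heart of the argument is then the explicit identification: after a sequence of invertible row and column operations one must recognize the dual of this mapping cone as $\Q_{\mathrm{gen}}$, matching the coordinate-free formulas of Definition~\ref{K} term by term against the differentials of the mapping cone. I expect this identification to be the main obstacle --- a long but conceptually straightforward computation in the chosen bases, in which essentially all of the work of the proof resides.

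An alternative, which also dispatches the degenerate possibility $\im q_{1}=R$, is to apply the Buchsbaum--Eisenbud acyclicity criterion to $\Q$ directly. The relevant ranks are $\rank Q_{1}-\rank Q_{2}+\rank Q_{3}=1$, $\rank Q_{2}-\rank Q_{3}=4$, and $\rank Q_{3}=2$, so one needs $\grade I_{1}(q_{1})\ge 1$, $\grade I_{4}(q_{2})\ge 2$, and $\grade I_{2}(q_{3})\ge 3$; the first holds because $I_{1}(q_{1})=\im q_{1}$. For the other two, recall that for any acyclic complex of finitely generated free modules resolving a cyclic module $R/J$ one has $J\subseteq\sqrt{I_{r_{i}}(d_{i})}$ for each $i$ (localizing away from $J$ makes the complex split exact, so each $I_{r_{i}}(d_{i})$ becomes the unit ideal there). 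Applied to $\Q_{\mathrm{gen}}$, whose acyclicity was obtained above, this gives $\im q_{1}^{\mathrm{gen}}\subseteq\sqrt{I_{4}(q_{2}^{\mathrm{gen}})}$ and $\im q_{1}^{\mathrm{gen}}\subseteq\sqrt{I_{2}(q_{3}^{\mathrm{gen}})}$; these inclusions amount to polynomial identities and so persist under the base change $R_{\mathrm{gen}}\to R$, whence $\grade I_{4}(q_{2})\ge\grade(\im q_{1})\ge 3$ and $\grade I_{2}(q_{3})\ge\grade(\im q_{1})\ge 3$, and the criterion applies.
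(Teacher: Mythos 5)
Your overall architecture is the paper's: the complex property by direct computation with (\ref{2.4.1}), Proposition~\ref{A3}, and (\ref{2.6.1}); acyclicity by passing to the generic instance (indeterminate data over $\Z$), establishing it there by a Peskine--Szpiro linkage, and then transferring it with the Persistence of Perfection Principle, Theorem~\ref{PoPP}. The gap is in the generic step itself. You assert that ``one produces a grade-three complete intersection ideal $K$ contained in a perfect grade-three ideal $I$, where $I$ carries an explicit free resolution $\mathbb F$ of length three,'' and that the dual of the mapping cone can then be recognized as the generic instance of $\Q$ --- but you never say what $I$ is, and supplying that ideal and carrying out the recognition is precisely the substantive content here. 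In the paper the ideal linked from is Brown's ideal $\im b_1$, resolved by the complex $\B$ of Definition~\ref{5.3}, whose own acyclicity must also be furnished (it is \cite[Prop.~3.6]{Br87}, or alternatively the further linkage step of Example~\ref{AltArg} starting from the classical resolution (\ref{6.1.1}) of a hyperplane section of a grade-two almost complete intersection); the complete intersection is $\im(p\circ b_1\circ c_1)$ for $c_1$ built from a generic $\ell\colon F^*\to G$, with Proposition~\ref{7.3} certifying the regular sequence; and the identification of the (split-summand-reduced) dual mapping cone with $\Q$ is Theorem~\ref{6.1}, whose proof exhibits the explicit comparison maps $c_2$ and $c_3$. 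Until a concrete $I$ with a known acyclic length-three resolution is named and that identification is performed, the generic-case claim is unsupported; and since your Buchsbaum--Eisenbud variant draws its grade estimates on $I_4(q_2)$ and $I_2(q_3)$ from generic acyclicity, both of your routes funnel through this same unproven core.

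On the points you do carry out, the argument is sound and matches the paper: the specialization $\Q=R\otimes_{R_{\mathrm{gen}}}\Q_{\mathrm{gen}}$ followed by Theorem~\ref{PoPP} is exactly the paper's mechanism, and your Buchsbaum--Eisenbud appendix is a genuine (if small) improvement, since it also disposes of the degenerate possibility $\im q_1=R$ (infinite grade, $S\otimes_RM=0$), which a literal reading of Theorem~\ref{PoPP} does not cover and which the paper's one-line application passes over in silence.
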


\begin{proof} 
A routine calculation, using (\ref{2.4.1}), Proposition~\ref{A3}, and (\ref{2.6.1}), shows that $\Q$ is a complex. If the data of \ref{3.1}
is sufficiently general (for example, if $\phi$, $\ell$, $\psi$, $\zeta$, and $z_2$ are all represented by matrices of indeterminates over the ring of integers), then Theorem~\ref{6.1} shows that $\Q$ is obtained from the resolution of Brown \cite[Prop.~3.6]{Br87}
by way of linkage. Hence $\Q$ resolves a perfect module when the data of \ref{3.1} is represented by indeterminates over the ring of integers. One applies the 
``Persistence of Perfection Principle'', Theorem~\ref{PoPP}, to obtain acyclicity in the general case.
\end{proof}

\subsection{Write $\Q$ using matrices.} 
We first fix bases and write the data of \ref{3.1} in terms of matrices. \begin{notation}\label{3.5} 
Adopt the data of \ref{3.1}.
 Fix dual bases $\{e_i\}$ and $\{e_i^*\}$  for $F$ and $F^*$, respectively;
dual bases $\{\beta_i\}$ and $\{\beta_i^*\}$   for $G$ and $G^*$, respectively; dual bases $$\Omega_F=e_1\w e_2\w e_3\quad  \text{and}\quad \Omega_{F^*}=e_3^*\w e_2^*\w e_1^*$$  for $\bw^3F$ and $\bw^3F^*$, respectively; and dual bases
$$\Omega_G=\beta_1\w \beta_2\quad\text{and}\quad \Omega_{G^*}=\beta_2^*\w \beta_1^*$$ for $\bw^2G$ and $\bw^2G^*$, respectively. 
Let $\phi$ and $\ell$ have matrices 
$$A=\bmatrix a_{11}&a_{12}&a_{13}\\ a_{21}&a_{22}&a_{23}\endbmatrix\quad\text{and}\quad L=\bmatrix \ell_{11}&\ell_{12}&\ell_{13}\\ \ell_{21}&\ell_{22}&\ell_{23}\endbmatrix$$ with respect to the above bases; in particular,
$$\phi(e_j)=a_{1j}\beta_1+a_{2j}\beta_2\quad\text{and}\quad \ell(e_j^*)=\ell_{1j}\beta_1+\ell_{2j}\beta_2,$$for $1\le j\le 3$. Let
 $$\psi=\psi_{12}e_1^*\w e_2^*+\psi_{13}e_1^*\w e_3^* +\psi_{23}e_2^*\w e_3^*$$ and $\zeta=z_1 \beta_1\w \beta_2$. Let $P$ be the vector 
$$P=\bmatrix \psi_{23}\\-\psi_{13}\\ \psi_{12}\endbmatrix.$$
Define $p(\Omega_G\t \Omega_{F^*})=1$.
\end{notation}
\begin{definition}
\label{3.5.1} Retain the notation of {\rm\ref{3.5}}. If
$$S=\{a_{ij}|1\le i\le 2, 1\le j\le 3\}\cup\{\ell_{ij}|1\le i\le 2, 1\le j\le 3\}\cup \{\psi_{12},\psi_{13},\psi_{23},z_1,z_2\}$$are indeterminates 
over the commutative Noetherian  ring $R_0$ and $R$ is the polynomial ring $R_0[S]$, then one says that $\Q$ is {\it built using variables over} $R_0$ and one refers to $R$ as $R_0[\Q]$.
\end{definition}

\begin{remark} 
When we give the matrices from a matrix version of $\Q$, we include the basis of the source across the top of the matrix and the basis for the target down the left hand column.\end{remark}
\begin{proposition}
\label{Q-matrix} When the notation of {\rm\ref{3.5}} is used, then the complex $\Q$ of 
Definition~{\rm\ref{K}} has the  form
$$0\to R^2\xrightarrow{\la q_3\ra} R^6\xrightarrow{\la q_2\ra}R^5\xrightarrow{\la q_1\ra} R,$$
where $$\la q_1\ra=\begin{array}{|c||c|c|c|c|c|}\hline
&\Omega_G\t\Omega_{F^*}&e_2\w e_3&-e_1\w e_3&e_1\w e_2&\Omega_{F^*}\\\hline\hline
\Omega_F&\gen_1&\gen_2&\gen_3&\gen_4&\gen_5\\\hline\end{array}\ ,$$
$\la q_2\ra=\bmatrix q_{2,\ell}&q_{2,r}\endbmatrix$, and
$$\la q_3\ra=\begin{array}{|c||c|c|}\hline
&\beta_1^*&\beta_2^*\\\hline\hline
\beta_1&a_{11}\ell_{11}+a_{12}\ell_{12}+a_{13}\ell_{13}
&a_{11}\ell_{21}+a_{12}\ell_{22}+a_{13}\ell_{23}+z_1
\\\hline
\beta_2&a_{21}\ell_{11}+a_{22}\ell_{12}+a_{23}\ell_{13}-z_1
&a_{21}\ell_{21}+a_{22}\ell_{22}+a_{23}\ell_{23}
\\\hline
\Omega_{F^*}&-a_{11}\psi_{23}+a_{12}\psi_{13}-a_{13}\psi_{12}
&-a_{21}\psi_{23}+a_{22}\psi_{13}-a_{23}\psi_{12}
\\\hline
e_1^*&-\ell_{12}\psi_{12}-\ell_{13}\psi_{13}+z_2a_{11}&-\ell_{22}\psi_{12}-\ell_{23}\psi_{13}+z_2a_{21}
\\\hline
e_2^*&\ell_{11}\psi_{12}-\ell_{13}\psi_{23}+z_2a_{12}&+\ell_{21}\psi_{12}-\ell_{23}\psi_{23}+z_2a_{22}
\\\hline
e_3^*&\ell_{11}\psi_{13}+\ell_{12}\psi_{23}+z_2a_{13}&+\ell_{21}\psi_{13}+\ell_{22}\psi_{23}+z_2a_{23}\\\hline
\end{array}\ ,$$
with
\begingroup\allowdisplaybreaks
\begin{align*}
\gen_1&{}=
-\left|\begin{matrix}
\psi_{23}&-\psi_{13}&\psi_{12}\\
\ell_{11}&\ell_{12}&\ell_{13}\\
\ell_{21}&\ell_{22}&\ell_{23}\end{matrix}\right|
-z_2\cdot \sum_{i=1}^3
\left|
\begin{matrix} a_{1i}&\ell_{1i}\\a_{2i}&\ell_{2i}\end{matrix}\right|
-z_2z_1,\\
\gen_2&{}=P\transpose A\transpose\bmatrix \ell_{21}\\-\ell_{11} \endbmatrix-z_2 \left|\begin{matrix} a_{12}&a_{13}\\a_{22}&a_{23}\end{matrix}\right|+z_1\psi_{23},\\
\gen_3&{}=P\transpose A\transpose\bmatrix \ell_{22}\\-\ell_{12}\endbmatrix+z_2 \left|\begin{matrix} a_{11}&a_{13}\\a_{21}&a_{23}\end{matrix}\right|-z_1\psi_{13},\\
\gen_4&{}=P\transpose A\transpose\bmatrix \ell_{23}\\-\ell_{13}\endbmatrix-z_2 \left|\begin{matrix} a_{11}&a_{12}\\a_{21}&a_{22}\end{matrix}\right|+z_1\psi_{12},\\
\gen_5&{}=
-\det (AL\transpose)
-z_1
\sum\limits_{i=1}^3 \left| \begin{matrix} a_{1i}&\ell_{1i}\\
a_{2i}&\ell_{2i}\end{matrix}\right| 
- z_1^2,\\
\end{align*}\endgroup
and $q_{2,\ell}$ and $q_{2,r}$ given in Tables~{\rm \ref{q_2ell}} and {\rm \ref{q_2r}}.
\end{proposition}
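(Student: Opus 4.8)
The plan is to derive Proposition~\ref{Q-matrix} from Definition~\ref{K} by a brute-force change of basis. Every homomorphism in $\Q$ is built from the structure data $\phi$, $\ell$, $\psi$, $\zeta$, $z_2$, the isomorphism $p$, and the module actions of $\bw^\bullet F$ on $\bw^\bullet F^*$ and of $\bw^\bullet G$ on $\bw^\bullet G^*$, so the first step is to record each of these ingredients in the dual bases of Notation~\ref{3.5}. One writes $\phi^*(\beta_i^*)=\sum_j a_{ij}e_j^*$ and $\ell^*(\beta_i^*)=\sum_j\ell_{ij}e_j$, so that $\phi\circ\ell^*\colon G^*\to G$ has matrix $AL\transpose$ and $[\bw^2(\phi\circ\ell^*)](\chi_{G^*})=\det(AL\transpose)\,\chi_G$; the element $\psi(\chi_F)\in F$ has coordinate vector $P$ (with the sign normalization of Notation~\ref{3.5}), so $\phi(\psi(\chi_F))$ is read off from $AP$; $(\bw^2\ell)(\psi)$ is the scalar multiple of $\chi_G$ whose coefficient is the $3\times3$ determinant occurring in $\gen_1$; $\sum_i\phi(E_i)\w\ell(E_i^*)=\big(\sum_i(a_{1i}\ell_{2i}-a_{2i}\ell_{1i})\big)\chi_G$; $\zeta=z_1\chi_G$, so $\zeta(\chi_{G^*})=z_1$; and $[\bw^2\phi^*](\chi_{G^*})$ is, up to sign, the sum of the $2\times2$ minors of $A$ against the corresponding $e_j^*\w e_k^*$. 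Once these identifications are in hand, every matrix entry in the Proposition is a finite sum of products of structure constants, and everything that follows is bookkeeping.

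I would then treat the three maps of $\Q$ in order of increasing difficulty. Because $Q_3=G^*$ is a single free module on $\{\beta_1^*,\beta_2^*\}$, the $6\times2$ matrix $\langle q_3\rangle$ is computed simply by evaluating $q_3(\beta_1^*)$ and $q_3(\beta_2^*)$ and expanding the three components, which lie in $\bw^3F^*$, $G$, and $F^*$, against $\Omega_{F^*}$, $\{\beta_1,\beta_2\}$, and $\{e_1^*,e_2^*,e_3^*\}$ respectively; for instance $\phi^*(\beta_1^*)\w\psi=(-a_{11}\psi_{23}+a_{12}\psi_{13}-a_{13}\psi_{12})\,\Omega_{F^*}$ once one uses $e_1^*\w e_2^*\w e_3^*=-\Omega_{F^*}$, and this is exactly the $(\Omega_{F^*},\beta_1^*)$ entry of the displayed table. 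Dually, $Q_0=\bw^3F$ is a single free module, so $\langle q_1\rangle$ is the single row $\bmatrix\gen_1&\cdots&\gen_5\endbmatrix$: here $\gen_1$ is read off from $q_1(\Omega_G\t\Omega_{F^*},0,0)$, the entries $\gen_2$, $\gen_3$, $\gen_4$ from $q_1(0,0,f_2)$ with $f_2=e_2\w e_3$, $-e_1\w e_3$, $e_1\w e_2$, and $\gen_5$ from $q_1(0,\Omega_{F^*},0)$. Using $p(\Omega_G\t\Omega_{F^*})=1$ together with the dictionary above, each of these collapses onto the displayed $3\times3$ and $2\times2$ determinants, the expressions $P\transpose A\transpose\left[\begin{smallmatrix}\ell_{2k}\\-\ell_{1k}\end{smallmatrix}\right]$, and the $z_1$- and $z_2$-corrections coming from $\zeta=z_1\chi_G$ and from scalar multiplication by $z_2$.

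The map $q_2$ is the heart of the computation, and it is here that Tables~\ref{q_2ell} and \ref{q_2r} get verified. The source $Q_2=\bw^3F^*\oplus G\oplus F^*$ supplies the six columns and the target $Q_1=(\bw^2G\t\bw^3F^*)\oplus\bw^3F^*\oplus\bw^2F$ carries the five basis vectors $\Omega_G\t\Omega_{F^*}$, $\Omega_{F^*}$, $e_2\w e_3$, $-e_1\w e_3$, $e_1\w e_2$; it is cleanest to process the three blocks $q_2(\mff_3,0,0)$, $q_2(0,g_1,0)$, $q_2(0,0,\mff_1)$ of Definition~\ref{K} separately, collecting the first two into $q_{2,\ell}$ and the $F^*$-block into $q_{2,r}$. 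For each column one substitutes the dictionary, expands the wedge products that occur --- such as $\mff_1\w\psi$, $\mff_1\w[\bw^2\phi^*](\chi_{G^*})$, and $f_2\w\psi(\chi_F)$ --- in the chosen bases, and repeatedly invokes $p(\Omega_G\t\Omega_{F^*})=1$. Several terms do not emerge in the displayed shape immediately and have to be rewritten using Proposition~\ref{A3}: part~(\ref{A3.b}) to interchange the two arguments of a top-degree pairing, parts~(\ref{A3.a}) and~(\ref{A3.c}) to push a wedge factor past a contraction, and identity~(\ref{2.6.1}) to handle the terms in which the evaluation of one top form on another is multiplied onto a canonical element.

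The main obstacle is purely one of signs and volume. The ``reversed'' choices $\Omega_{F^*}=e_3^*\w e_2^*\w e_1^*$ and $\Omega_{G^*}=\beta_2^*\w\beta_1^*$ in Notation~\ref{3.5} are made precisely so that $p$, the evaluation rule $a_i(\alpha_i)=\alpha_i(a_i)$ of~(\ref{2.4.1}), and all of the various contractions line up, and the delicate point is to keep these orientation conventions consistent across each of the several-term entries of the $q_2$ block. The safe route is to fix once and for all the numerical values of $(e_j^*\w e_k^*)(e_r\w e_s)$ and $(e_j^*\w e_k^*)(\chi_F)$, grind through every entry, and then use the coordinate identities $\langle q_1\rangle\langle q_2\rangle=0$ and $\langle q_2\rangle\langle q_3\rangle=0$ --- already known in coordinate-free form from Theorem~\ref{Q acyclic} --- as an internal consistency check on the tables. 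Beyond this, there is no conceptual difficulty.
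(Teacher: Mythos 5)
Your plan coincides with how the paper treats this result: Proposition~\ref{Q-matrix} carries no separate proof, being exactly the routine expansion of the coordinate-free maps of Definition~\ref{K} in the bases of Notation~\ref{3.5}, which is the computation you describe, including the careful sign bookkeeping forced by $\Omega_{F^*}=e_3^*\wedge e_2^*\wedge e_1^*$ and $\Omega_{G^*}=\beta_2^*\wedge\beta_1^*$ and the consistency check via $\la q_1\ra\la q_2\ra=0$ and $\la q_2\ra\la q_3\ra=0$. One entry of your dictionary needs correcting before the grind: with these orientations $[\bigwedge^2(\phi\circ\ell^*)](\chi_{G^*})=-\det(AL^{\mathrm T})\,\chi_G$ rather than $+\det(AL^{\mathrm T})\,\chi_G$, and this is precisely the sign that produces the leading term $-\det(AL^{\mathrm T})$ of $\gen_5$.
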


\begin{table}
\begin{center}
$$ 
q_{2,\ell}=\begin{array}{|c|c|c|c|}\hline
&\beta_1&\beta_2&\Omega_{F^*}
\\\hline
\Omega_G\t\Omega_{F^*}
&\begin{cases}\psi_{12}a_{23}\\-\psi_{13}a_{22}\\+\psi_{23}a_{21}\end{cases}&\begin{cases}-\psi_{12}a_{13}
\\+\psi_{13}a_{12}\\
-\psi_{23}a_{11}\end{cases}
&z_1\\\hline
e_2\w e_3&\begin{cases}-\ell_{22}\psi_{12}\\-\ell_{23}\psi_{13}\\+z_2a_{21}\end{cases}&
\begin{cases}\psi_{12}\ell_{12}\\+\psi_{13}\ell_{13}\\-z_2a_{11}\end{cases}&\left|\begin{matrix}
\ell_{12}&\ell_{13}\\
\ell_{22}&\ell_{23}
 \end{matrix}\right|
\\\hline
-e_1\w e_3&\begin{cases}\ell_{21}\psi_{12}\\-\ell_{23}\psi_{23}\\+z_2a_{22}\end{cases}&\begin{cases} -\psi_{12}\ell_{11}\\+\psi_{23}\ell_{13}\\-z_2a_{12}\end{cases}
&-\left|\begin{matrix} 
\ell_{11}&\ell_{13}\\
\ell_{21}&\ell_{23}
\end{matrix}\right|
\\\hline
e_1\w e_2&
\begin{cases}\ell_{21}\psi_{13}\\+\ell_{22}\psi_{23}\\+z_2a_{23}\end{cases}
&\begin{cases}-\psi_{13}\ell_{11}\\-\psi_{23}\ell_{12}\\-z_2a_{13}\end{cases}
&\left|\begin{matrix} 
\ell_{11}&\ell_{12}\\
\ell_{21}&\ell_{22}\\
\end{matrix}\right|
 \\\hline
\Omega_{F^*}&0&0&-z_2
\\\hline
\end{array}$$
\caption{{ The matrix $q_{2,\ell}$  
of Proposition~\ref{Q-matrix}.
\label{q_2ell}}} 
\end{center}
\end{table}

\begin{table}
\begin{center}
$$q_{2,r}=\begin{array}{|c|c|c|c|}\hline
&
e_1^*&e_2^*&e_3^*\\\hline
\Omega_G\t \Omega_{F^*}&
\left|\begin{matrix}a_{12}&a_{13}\\a_{22}&a_{23} \end{matrix}\right|&-\left|\begin{matrix}a_{11}&a_{13}\\a_{21}&a_{23} \end{matrix}\right|&+ \left|\begin{matrix} a_{11}&a_{12}\\a_{21}&a_{22} \end{matrix}\right|
\\\hline
e_2\w e_3&
\begin{cases} -\left|
\begin{matrix} a_{12}&\ell_{12}\\a_{22}&\ell_{22}\end{matrix}\right|\\[15pt]
-\left|
\begin{matrix} a_{13}&\ell_{13}\\a_{23}&\ell_{23}\end{matrix}\right|\\
-z_1 \end{cases}&+ \left|\begin{matrix}a_{11}&\ell_{12}\\a_{21}&\ell_{22} \end{matrix}\right|&+ \left|\begin{matrix}a_{11}&\ell_{13}\\a_{21}&\ell_{23} \end{matrix}\right| \\\hline
-e_1\w e_3&
\left|\begin{matrix}a_{12}&\ell_{11}\\a_{22}&\ell_{21} \end{matrix}\right| &\begin{cases}
- \left|
\begin{matrix} a_{11}&\ell_{11}\\a_{21}&\ell_{21}\end{matrix}\right|\\[15pt]
-\left|
\begin{matrix} a_{13}&\ell_{13}\\a_{23}&\ell_{23}\end{matrix}\right|\\
-z_1\end{cases}&\left|\begin{matrix}a_{12}&\ell_{13}\\a_{22}&\ell_{23} \end{matrix}\right|
\\\hline
e_1\w e_2&
\left|\begin{matrix} a_{13}&\ell_{11}\\a_{23}&\ell_{21} \end{matrix}\right|&+\left|\begin{matrix} a_{13}&\ell_{12}\\a_{23}&\ell_{22} \end{matrix}\right|&
\begin{cases}
- \left|
\begin{matrix} a_{11}&\ell_{11}\\a_{21}&\ell_{21}\end{matrix}\right|\\[15pt]
- \left|
\begin{matrix} a_{12}&\ell_{12}\\a_{22}&\ell_{22}\end{matrix}\right|\\
-z_1
\end{cases}
 \\\hline
\Omega_{F^*}&
-\psi_{23}&+\psi_{13}&-\psi_{12}\\\hline
\end{array}$$
\caption{{ The matrix
$q_{2,r}$ of Proposition~\ref{Q-matrix}.
\label{q_2r}}} 
\end{center}
\end{table}

\begin{remark} 
\label{3.10}
Adopt the notation of \ref{3.5}. 
If $R$ is a bi-graded ring with $$a_{ij},\ \ell_{ij}\in R_{(1,0)},\quad
z_1\in R_{(2,0)}, \quad\text{and}\quad \psi_{ij},\ z_2\in R_{(0,1)},$$ 
then  $\Q$  is the bi-homogeneous complex
$$0\to R(-5,-2)^2\xrightarrow{\ \ \la q_3\ra\ \ } \begin{matrix}R(-3,-2)^2 \\\p\\ R(-4,-1)^4\end{matrix}\xrightarrow{\ \ \la q_2\ra\ \ } \begin{matrix}R(-2,-1)^4\\\p\\ R(-4,0)\end{matrix}\xrightarrow{\ \ \la q_1\ra\ \ } R.$$
\end{remark}

\section{The complex $\M$ of Celikbas,  Laxmi, Kra\'skiewicz, and Weyman.}
We begin by recording the complex of \cite{CKLW}. We copied the second and third differentials without change and we made three necessary sign adjustments in the first differential.
\begin{definition}
\label{M} Let $R$ be a commutative Noetherian ring and let
$$\{x_{ij},y_{ij}|1\le i<j\le 4\}\cup\{z_{ijk}|1\le i<j<k\le 4\}\cup\{t\}$$ 
be elements of $R$. The complex $\M$ is 
$$0\to R^2\xrightarrow{m_3} R^6\xrightarrow{m_2} R^5\xrightarrow{m_1} R,$$
where \begingroup\allowdisplaybreaks\begin{align*}
m_1&{}=\bmatrix -u_{234}+tz_{234}&-u_{134}+tz_{134}&-u_{124}+tz_{124}&-u_{123}-tz_{123}&-u+t^2\endbmatrix,\\
m_2&{}=\bmatrix m_{2,\ell}&m_{2,r}\endbmatrix,\\
m_3&{}=\bmatrix 
\begin{cases}x_{12}y_{34}-x_{13}y_{24}+x_{14}y_{23}\\+x_{34}y_{12}-x_{24}y_{13}+x_{23}y_{14}+t\end{cases}&2\Big(x_{12}x_{34}-x_{13}x_{24}+x_{14}x_{23}\Big)\\
-2\Big(y_{12}y_{34}-y_{13}y_{24}+y_{14}y_{23}\Big)&\begin{cases}-x_{12}y_{34}+x_{13}y_{24}-x_{14}y_{23}\\-x_{34}y_{12}+x_{24}y_{13}-x_{23}y_{14}+t\end{cases}\\
-\Big(-y_{12}z_{134}+y_{13}z_{124}-y_{14}z_{123}\Big)&-\Big( -x_{12}z_{134}+x_{13}z_{124}-x_{14}z_{123}\Big)\\
-y_{12}z_{234}+y_{23}z_{124}-y_{24}z_{123}&-x_{12}z_{234}+x_{23}z_{124}-x_{24}z_{123}\\
-y_{13}z_{234}+y_{23}z_{134}-y_{34}z_{123}&-x_{13}z_{234}+x_{23}z_{134}-x_{34}z_{123}\\
-\Big(-y_{14}z_{234}+y_{24}z_{134}-y_{34}z_{124}\Big)&-\Big(-x_{14}z_{234}+x_{24}z_{134}-x_{34}z_{124}\Big)
\endbmatrix\ ,\end{align*}\endgroup
with 
\begingroup\allowdisplaybreaks\begin{align*}
u_{123}&{}=\begin{cases}-2z_{234}D(12,13)+2z_{134}D(12,23)\\-2z_{124}D(13,23)+z_{123}(D(13,24)-D(12,34)+D(14,23)),\end{cases}\\
u_{124}&{}=\begin{cases}2z_{234}D(12,14)-2z_{134}D(12,24)\\+z_{124}(D(12,34)+D(13,24)+D(14,23))-2z_{123}D(14,24),\end{cases}\\
u_{134}&{}=\begin{cases}2z_{234}D(13,14)+z_{134}(-D(12,34)-D(13,24)+D(14,23))\\+2z_{124}D(13,34)-2z_{123}D(14,34),\end{cases}\\
u_{234}&{}=\begin{cases}z_{234}(-D(12,34)+D(13,24)-D(14,23))-2z_{134}D(23,24)\\+2z_{124}D(23,34)-2z_{123}D(24,34),\end{cases}\\
u&{}=b^2-4ac,\\
\intertext{for}
D(ij,k\ell)&{}=\left|\begin{matrix} x_{ij}&x_{k\ell}\\y_{ij}&y_{k\ell}\end{matrix}\right|,\\
a&{}=x_{12}x_{34}-x_{13}x_{24}+x_{14}x_{23},\\
b&{}=x_{12}y_{34}-x_{13}y_{24}+x_{14}y_{23}+y_{12}x_{34}-y_{13}x_{24}+y_{14}x_{23},\\
c&{}=y_{12}y_{34}-y_{13}y_{24}+y_{14}y_{23},
\\
\end{align*}\endgroup
and $m_{2,\ell}$ and $m_{2,r}$ are given in Tables \ref{m_{2,ell}} and \ref{m_{2,r}}.
 \end{definition}

\begin{table}
\begin{center}
$$ 
m_{2,\ell}=\bmatrix
\begin{cases}-y_{12}z_{134}\\+y_{13}z_{124}\\-y_{14}z_{123}\end{cases}
&
\begin{cases}-x_{12}z_{134}\\+x_{13}z_{124}\\-x_{14}z_{123}\end{cases}
&\begin{cases}
-x_{12}y_{34}+x_{34}y_{12}\\
+x_{13}y_{24}-x_{24}y_{13}\\
-x_{14}y_{23}+x_{23}y_{14}\\
+t
\end{cases}
\\\hline
\begin{cases}y_{12}z_{234}\\-y_{23}z_{124}\\+y_{24}z_{123}\end{cases}
&\begin{cases}x_{12}z_{234}\\-x_{23}z_{124}\\+x_{24}z_{123}\end{cases}
&-2(x_{23}y_{24}-x_{24}y_{23})
\\\hline
\begin{cases}-y_{13}z_{234}\\+y_{23}z_{134}\\-y_{34}z_{123}\end{cases}
&\begin{cases}-x_{13}z_{234}\\+x_{23}z_{134}\\-x_{34}z_{123}\end{cases}
&2(x_{23}y_{34}-x_{34}y_{23})
\\\hline
\begin{cases}-y_{14}z_{234}\\+y_{24}z_{134}\\-y_{34}z_{124}\end{cases}&
\begin{cases}-x_{14}z_{234}\\+x_{24}z_{134}\\-x_{34}z_{124}\end{cases}
&2(x_{24}y_{34}-x_{34}y_{24})

\\\hline
0&0&-z_{234}
\endbmatrix$$
\caption{The matrix $m_{2,\ell}$ 
of Definition~\ref{M}.
\label{m_{2,ell}}} 
\end{center}
\end{table}

\begin{table}
\begin{center}
$$ 
m_{2,r}=\bmatrix
2(x_{13}y_{14}-x_{14}y_{13})
&-2(x_{12}y_{14}-x_{14}y_{12})
&-2(x_{12}y_{13}-x_{13}y_{12})
\\\hline
\begin{cases}
-x_{12}y_{34}+x_{34}y_{12}\\
-x_{13}y_{24}+x_{24}y_{13}\\
+x_{14}y_{23}-x_{23}y_{14}\\
+t
\end{cases}
&2(x_{12}y_{24}-x_{24}y_{12})
&2(x_{12}y_{23}-x_{23}y_{12})
\\\hline
2(x_{13}y_{34}-x_{34}y_{13})
&\begin{cases}
-x_{12}y_{34}+x_{34}y_{12}\\
-x_{13}y_{24}+x_{24}y_{13}\\
-x_{14}y_{23}+x_{23}y_{14}\\
-t
\end{cases}
&-2(x_{13}y_{23}-x_{23}y_{13})
\\\hline
2(x_{14}y_{34}-x_{34}y_{14})
&
-2(x_{14}y_{24}-x_{24}y_{14})
&\begin{cases}
x_{12}y_{34}-x_{34}y_{12}\\
-x_{13}y_{24}+x_{24}y_{13}\\
-x_{14}y_{23}+x_{23}y_{14}\\
+t
\end{cases}\\\hline
-z_{134}&z_{124}&z_{123}
\endbmatrix$$
\caption{ The matrix $m_{2,r}$ of Definition~\ref{M}.
\label{m_{2,r}}} 
\end{center}
\end{table}
\begin{theorem}
\label{W acyclic} The maps and modules $\M$ 
 of Definition~{\rm\ref{M}} form a complex{\rm;} furthermore, if the ideal $\im m_1$ of $R$ has grade at least three, then $\M$ is acyclic. \end{theorem}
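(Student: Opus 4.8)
The plan is to follow the pattern of the proof of Theorem~\ref{Q acyclic}. First I would verify that $\M$ is a complex by checking directly that $m_1m_2=0$ and $m_2m_3=0$; since $m_2$ and $m_3$ are copied verbatim from \cite{CKLW} and only three signs of $m_1$ were altered, the only identities that genuinely need a fresh look are the ones involving $m_1$, and after expansion these reduce to Pl\"ucker-type relations among the $2\times 2$ minors $D(ij,k\ell)$ together with the defining formulas for $a$, $b$, $c$, $u$, and $u_{123},u_{124},u_{134},u_{234}$.

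For acyclicity it suffices, by the Persistence of Perfection Principle (Theorem~\ref{PoPP}), to treat the generic case: over the polynomial ring $\Z[x_{ij},y_{ij},z_{ijk},t]$ one must show that the ideal $\im m_1$ is perfect of grade three and is resolved by $\M$. I would establish this by realizing $\im m_1$ through linkage, using the recipe of \cite{CKLW} --- the same recipe that produced $\Q$ in Definition~\ref{K} --- but carried out over $\Z$ rather than over a field. One starts from a complete intersection ideal and performs the prescribed sequence of links; at each step the next ideal has the form $K:J$, where $J$ is the perfect ideal already constructed and $K\subseteq J$ is a complete intersection whose generators are products of the generators of $J$ with generic multipliers. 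Proposition~\ref{7.3} and Example~\ref{2.12} are precisely what certifies, over $\Z$, that such products form regular sequences (and, where needed, generate primes of the expected grade), so that each $K$ really is a complete intersection of the right grade inside $J$; then \ref{PLEASE WRITE ME} guarantees that $K:J$ is again perfect of grade three, resolved by the mapping cone --- suitably dualized and minimalized --- of a comparison map from the Koszul complex of $K$ to the already known resolution of the quotient by $J$. Tracking the cumulative mapping cone through the chain of links, and matching the resulting differentials with those of Definition~\ref{M}, identifies the outcome with $\M$, so $\M$ resolves a perfect module in the generic case.

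With the generic case in hand, the general statement follows: if $R$ is any Noetherian ring carrying the listed elements with $\grade(\im m_1)\ge3$ (which we may assume proper), then the specialization from the generic polynomial ring to $R$ carries the generic $\im m_1$ onto $\im m_1$, and $\grade(\ann(R/\im m_1))=3\le\grade(\im m_1)$, so Theorem~\ref{PoPP} makes $R/\im m_1$ a perfect $R$-module of projective dimension three resolved by $\M$; that is, $\M$ is acyclic. As a cross-check, and as an alternative route for every ring in which $2$ is a unit, one may instead observe that over such a ring $\M$ becomes isomorphic to $\Q$ (Theorem~\ref{K-to-CKLW}), which is acyclic by Theorem~\ref{Q acyclic}.

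The step I expect to be the main obstacle is the passage from a field to $\Z$ in the generic case: one must check that the ``generic combination'' ideals appearing at each linkage step are genuine regular sequences over the integers, and prime wherever the argument needs it --- exactly the service Proposition~\ref{7.3} is designed to provide --- while at the same time keeping careful track of how the successive mapping cones assemble into the precise entries of $m_1$, $m_2$, and $m_3$ in Definition~\ref{M}, in particular the factors of $2$ in $m_2$ and $m_3$, which is where the behaviour in characteristic two is delicate.
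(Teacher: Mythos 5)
Your fallback observation (when $2$ is a unit, $\M\cong\Q$ by Theorem~\ref{K-to-CKLW}, so acyclicity follows from Theorem~\ref{Q acyclic} and Theorem~\ref{PoPP}) is exactly the alternative route the paper mentions; the paper's primary proof, however, is simply the citation \cite[Thm.~3.1]{CKLW}. Your direct verification that $\M$ is a complex is fine, if laborious. The problem is the main route you propose for the generic case, and it sits precisely where you flag the danger. Carrying out the linkage recipe over $\Z$ produces the complex $\Q$ of Definition~\ref{K} --- that is the content of Example~\ref{AltArg} together with Theorem~\ref{6.1} --- and \emph{not} the complex $\M$ of Definition~\ref{M}; indeed the whole point of introducing $\Q$ in this paper is that the linkage output makes sense over $\Z$, whereas it only becomes isomorphic to $\M$ after $2$ is inverted. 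The identification you need (``matching the resulting differentials with those of Definition~\ref{M}'') is exactly Theorem~\ref{K-to-CKLW}, whose hypothesis is that $2$ is a unit: the substitution $\mu$ sends $\psi_{12}\mapsto z_{123}/2$, $z_2\mapsto z_{234}/2$, $z_1\mapsto(-t+\cdots)/2$, and the chain isomorphism uses the scalars $2$, $4$, and $16$. Over $\Z$ no such matching is available; for instance the last entry of $m_1$ is $t^2-b^2+4ac$, which reduces modulo $2$ to the square $(t+b)^2$, so the characteristic-two behaviour of $\M$ is not controlled by anything you have established about $\Q$.

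Consequently your argument proves acyclicity of the generic $\M$ only over base rings containing $1/2$, and Theorem~\ref{PoPP} can then transfer it only to algebras over such rings; rings in which $2$ is not invertible (in particular characteristic two), which are covered by the statement of Theorem~\ref{W acyclic}, are left out. To close the gap you would need an independent argument that $\M$ built over $\Z$ resolves a perfect module --- for example a direct verification of the Buchsbaum--Eisenbud exactness criterion for the ideals of minors of $m_1$, $m_2$, $m_3$ over $\Z$ --- or else you must, as the paper does, quote \cite[Thm.~3.1]{CKLW} for the general case.
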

\begin{proof}
This result is \cite[Thm.~3.1]{CKLW}. One could also use Theorems~\ref{Q acyclic},  \ref{K-to-CKLW}, and \ref{PoPP}.
\end{proof}
\begin{definition}
\label{4.2}  Retain the notation of {\rm\ref{M}}. If
$$S=\{x_{ij},y_{ij}|1\le i<j\le 4\}\cup\{z_{ijk}|1\le i<j<k\le 4\}\cup\{t\}
$$are indeterminates 
over the commutative Noetherian  ring $R_0$ and $R$ is the polynomial ring $R_0[S]$, then one says that $\M$ is {\it built using variables over} $R_0$ and one refers to $R$ as $R_0[\M]$.
\end{definition}

\begin{remark}\label{4.4} 
Adopt the language of Definition~{\rm\ref{M}}.
If $R$ is a bi-graded ring with $$x_{ij},\ y_{ij}\in R_{(1,0)},\quad
t\in R_{(2,0)}, \quad\text{and}\quad z_{ijk}\in R_{(0,1)},$$ 
then $\M$ is the bi-homogeneous complex
$$0\to R(-5,-2)^2\xrightarrow{\ \ m_3\ \ } \begin{matrix}R(-3,-2)^2 \\\p\\ R(-4,-1)^4\end{matrix}\xrightarrow{\ \ m_2\ \ } \begin{matrix}R(-2,-1)^4\\\p\\ R(-4,0)\end{matrix}\xrightarrow{\ \ m_1\ \ } R.$$
\end{remark}

\subsection{The map which carries $\Q$ onto $\M$.}
\begin{theorem}
\label{K-to-CKLW} 
 Let $R_0$ be a commutative Noetherian ring in which two is a unit.
If the complexes $\Q$ and $\M$ of Definitions {\rm\ref{K}} and {\rm\ref{M}} are both  
built using variables over 
$R_0$, in the sense of {\rm\ref{3.5.1}} and {\rm\ref{4.2}}, then there is a surjective $R_0$-algebra homomorphism $\mu:R_0[\Q]\to R_0[\M]$ such that the complexes $\Q\t_{R_0[\Q]} R_0[\M]$ and $\M$ are isomorphic. 
\end{theorem}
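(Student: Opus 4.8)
The plan is to write down the $R_0$-algebra map $\mu:R_0[\Q]\to R_0[\M]$ explicitly on generators and then check, entry by entry, that the three differentials of $\Q\t_{R_0[\Q]}R_0[\M]$ agree with the three differentials of $\M$ up to a fixed invertible change of basis in each homological degree. Since $R_0[\Q]$ is a polynomial ring over $R_0$ on the variables $S$ of Definition~\ref{3.5.1}, specifying $\mu$ amounts to assigning to each of $a_{ij},\ell_{ij},\psi_{12},\psi_{13},\psi_{23},z_1,z_2$ an element of $R_0[\M]$. I would first extract a candidate assignment from the matrix form of $\Q$ in Proposition~\ref{Q-matrix} and Tables~\ref{q_2ell}, \ref{q_2r} by comparing, as a first pass, the linear parts of $\langle q_2\rangle$ with the linear parts of $m_2$ in Tables~\ref{m_{2,ell}}, \ref{m_{2,r}}: most of the $z_{ijk}$'s and the simple $2\times2$ determinants $D(ij,k\ell)$ appear there directly, and the entries $0,0,-z_2$ versus $0,0,-z_{234}$ in the bottom rows force $z_2\mapsto z_{234}$ (up to sign) immediately. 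Surjectivity of $\mu$ should then come essentially for free, since the target's generators $x_{ij},y_{ij},z_{ijk},t$ will visibly lie in the image (the $x_{ij},y_{ij}$ as coefficients of linear forms, $t$ from a quadratic expression, matching how $t$ and $z_2$ enter the respective bottom rows and Pfaffian-type entries of $m_3$ and $\langle q_3\rangle$).

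The natural guess, consistent with the bi-gradings of Remarks~\ref{3.10} and \ref{4.4} (which match termwise), is that $\mu$ sends the degree-$(1,0)$ variables $a_{ij},\ell_{ij}$ to $R_0[\M]$-linear combinations of the $x_{ij},y_{ij}$, sends $z_1$ (degree $(2,0)$) to a quadratic expression in the $x$'s and $y$'s — the role of $z_1$ in $\gen_5=-\det(AL\transpose)-z_1\sum_i|{}\cdots{}|-z_1^2$ mirrors the role of $u=b^2-4ac$ in $m_1$, so I expect $z_1$ to map to something built from $a,b,c$ — and sends the degree-$(0,1)$ variables $\psi_{ij},z_2$ to $R_0$-linear combinations of the $z_{ijk}$'s. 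Because two is a unit in $R_0$, the factors of $2$ that pervade $m_2,m_3$ (the entries $2(x_{23}y_{24}-x_{24}y_{23})$, etc., and the $2a$, $-2c$ in the top row of $m_3$) can be absorbed by scaling basis vectors of the free modules $Q_2,Q_3$ or by scaling the $\ell_{ij}$-images by $2$; pinning down exactly where the $2$'s go is a bookkeeping matter but is where the hypothesis "two is a unit" is actually used.

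Once $\mu$ is fixed, the verification that $\Q\t_{R_0[\Q]}R_0[\M]\cong\M$ is a finite computation: produce invertible matrices $U_0\in GL_1$, $U_1\in GL_5$, $U_2\in GL_6$, $U_3\in GL_2$ over $R_0[\M]$ (most of whose entries are $0$, $\pm1$, or $\pm2^{\pm1}$, with perhaps a few linear forms to clean up the off-pattern entries such as the mixed $-x_{12}y_{34}+x_{34}y_{12}+\cdots+t$ blocks) and check $U_{i-1}\,\mu(\langle q_i\rangle)=m_i\,U_i$ for $i=1,2,3$. Alternatively — and this is cleaner to present — I would do the verification in the coordinate-free language of Definition~\ref{K}: choose the images under $\mu$ of the structural data $\phi,\ell,\psi,\zeta,z_2$, plug them into the formulas for $q_1,q_2,q_3$, and use Proposition~\ref{A3} together with (\ref{2.6.1}) to simplify, reading off $m_1,m_2,m_3$. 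The main obstacle is not conceptual but combinatorial: finding the correct change of basis and the correct placement of signs and powers of $2$ so that the many structurally-similar-but-not-identical entries (e.g.\ the four "$\pm x_{12}y_{34}\pm\cdots\pm t$" entries scattered through $m_2,m_3$ with varying sign patterns, versus the $z_1\psi_{ij}$ and $z_2$ terms in $\langle q_2\rangle,\langle q_3\rangle$) line up simultaneously across all three differentials. I would organize this by first matching $m_3$ with $\langle q_3\rangle$ (only $2\times2$ and $3\times3$, so the fewest degrees of freedom), which fixes $\mu$ and $U_3$; then matching $m_2$ with $\langle q_2\rangle$, which fixes $U_2$ given the already-determined $\mu$; and finally checking $m_1$ against $\langle q_1\rangle$, where $U_1$ and $U_0$ have just enough freedom to absorb the remaining discrepancy, the identity $\mu(\gen_5)=-(u-t^2)$ (up to scalar) being the key closing check that ties the $z_1\mapsto(\text{quadratic})$ choice to the $u=b^2-4ac$ of Definition~\ref{M}.
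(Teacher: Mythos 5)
Your plan is essentially the paper's proof: the paper likewise defines $\mu$ explicitly on the generators (e.g.\ $\mu(a_{11})=-x_{12}$, $\mu(\psi_{12})=z_{123}/2$, $\mu(z_2)=z_{234}/2$, and $\mu(z_1)=(-t+x_{12}y_{34}-x_{13}y_{24}+x_{14}y_{23}-x_{34}y_{12}+x_{24}y_{13}-x_{23}y_{14})/2$) and then exhibits the isomorphism of complexes via diagonal-type matrices $\sigma_3,\sigma_2,\sigma_1,\sigma_0$ with entries $\pm1,\pm2,\pm4,16$, exactly the invertible scalings you predicted would absorb the factors of $2$. The one small correction to your heuristics is that $\mu(z_1)$ involves $t$ as well as the quadratic Pfaffian-type expression in the $x$'s and $y$'s, but since $t$ has bidegree $(2,0)$ this is consistent with your grading argument and would be forced by your own first step of matching $\la q_3\ra$ against $m_3$.
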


\begin{proof}Let $R=R_0[\M]$. Define $\mu:R_0[\Q]\to R_0[\M]$ by 
\begin{align*}
&\mu(a_{11})= -x_{12},&&
\mu(a_{12})= -x_{13},&&
\mu(a_{13})= x_{14},&&
\mu(\ell_{13})= x_{23},\\
&\mu(\ell_{12})= x_{24},&&
\mu(\ell_{11})= -x_{34},&&
\mu(a_{21})= y_{12},&&
\mu(a_{22})= y_{13},\\
&\mu(a_{23})= -y_{14},&&
\mu(\ell_{23})= -y_{23},&&
\mu(\ell_{22})= -y_{24},&&
\mu(\ell_{21})= y_{34},\\
&\mu(\psi_{12})= z_{123}/2,&&
\mu(\psi_{13})= -z_{124}/2,&&
\mu(\psi_{23})= -z_{134}/2,&&
\mu(z_2)= z_{234}/2,\\
\end{align*}and
$$\mu(z_1)=(-t+x_{12}y_{34}-x_{13}y_{24}+x_{14}y_{23}-x_{34}y_{12}+x_{24}y_{13}-x_{23}y_{14})/2.$$
Observe that
$$\xymatrix{0\ar[r]& R^2\ar[rr]^{\mu(\la q_3\ra)}\ar[d]^{\sigma_3}&&R^6\ar[rr]^{\mu(\la q_2\ra )}\ar[d]^{\sigma_2}&&R^5\ar[rr]^{\mu(\la q_1\ra)}\ar[d]^{\sigma_1}&&R\ar[d]^{\sigma_0}
\\
0\ar[r]& R^2\ar[rr]^{m_3}&&R^6\ar[rr]^{m_2}&&R^5\ar[rr]^{m_1}&&R}$$
is an isomorphism of complexes, where
$$\sigma_3=\bmatrix 0&-1\\1&0\endbmatrix,\quad 
\sigma_2=\bmatrix 
2&0&0&0&0&0\\
0&2&0&0&0&0\\
0&0&-2&0&0&0\\
0&0&0&2&0&0\\
0&0&0&0&2&0\\
0&0&0&0&0&2\endbmatrix,\quad \sigma_1=\bmatrix 4&0&0&0&0\\0&4&0&0&0\\0&0&-4&0&0\\0&0&0&4&0\\
0&0&0&0&-4\endbmatrix,$$
and 
$\sigma_0=16$.
\end{proof}

\section{The complex $\B$ of Brown.}A coordinate-free description of the  complex  $\B$ from \cite[Prop.~3.6]{Br87} is given in Definition~\ref{5.3}. A matrix version of $\B$ may be found in \ref{B-matrix}. A map which carries $\Q$ onto $\B$ is given in Theorem~\ref{yes}. 

\begin{data}
\label{5.1} Let $R$ be a commutative Noetherian ring, $F$ and $G$ be free $R$-modules with $\rank F=3$ and $\rank G=2$, $\phi:F\to G$  
be an $R$-module homomorphism, $\psi\in\bw^2F^*$, $\zeta\in \bw^2G$, and $z_2\in R$. Let $p:\bw^2G\t \bw^3F^*\to R$ be a fixed isomorphism. 
\end{data}

\begin{notation}
\label{5.2} Adopt the data of \ref{5.1}.
We use $g_i$, $\g_i$, $f_i$, and $\mff_i$ for arbitrary elements of $\bw^iG$, $\bw^iG^*$, $\bw^i F$, and $\bw^iF^*$, respectively. We use $\chi_F\t \chi_{F^*}$ and $\chi_G\t \chi_{G^*}$ for the canonical elements of $\bw^3F\t \bw^3F^*$ and $\bw^2G\t \bw^2G^*$, respectively; see \ref{2.3}. 
 Notice that every expression which involves $\chi_F$ must also involve  $\chi_{F^*}$. The analogous statement is also in effect for each of the objects $\chi_{F^*}$, $\chi_G$, and  $\chi_{G^*}$.  
\end{notation}

\begin{definition}
\label{5.3} Adopt Data {\rm\ref{5.1}} and Notation~{\rm\ref{5.2}}. 
 Let $\B$ be the following collection of $R$-module homomorphisms:
$$0\to B_3\xrightarrow{b_3}B_2\xrightarrow{b_2}B_1\xrightarrow{b_1}B_0,$$where
\begingroup\allowdisplaybreaks
\begin{align*} 
&\ts B_3=(\bw^2G^*\t \bw^3F)\p\bw^3 F,\quad
B_2= 
\bw^3 F\p G^*\p F,\quad
B_1=G\p F^*,\\
&B_0=\ts\bw^2G\t \bw^3F^*,\\
&\Br_3\left(\bmatrix \g_2\t f_3\\f_3'\endbmatrix \right)=\bmatrix \zeta(\g_2)\cdot f_3-z_2\cdot f_3'\\
-[\phi(\psi(f_3))](\g_2)\\
[(\bw^2\phi^*)(\g_2)](f_3)+\psi(f_3')\endbmatrix\ , 
\\&\Br_2\left( \bmatrix f_3\\ \g_1\\ f_1\endbmatrix\right)=\bmatrix
\phi(\psi(f_3))
+\g_1(\zeta)+z_2\cdot \phi(f_1)\\
-\phi^*(\g_1)+f_1(\psi)
 \endbmatrix\ ,\intertext{and} 
&\Br_1\left(\bmatrix g_1\\\mff_1 \endbmatrix \right)=\begin{cases}
-g_1\w \phi[\psi(\chi_F)]\t \chi_{F^*}+
z_2\cdot   \chi_G\t \mff_1\w (\ts\bw^2\phi^*)(\chi_{G^*})
\\+\zeta\t \mff_1\w \psi.\end{cases}
\end{align*}\endgroup 
\end{definition}

\begin{remark}
Our phrasing of the data involved in \ref{5.1} 
is slightly different than the phrasing in \cite[Prop.~3.6]{Br87}. Brown uses a $5\times 5$ alternating matrix $T$. Brown also points out, in the note above \cite[Prop.~3.6]{Br87}, that no generality is lost if $t_{12}$ is set equal to zero. We  view this data as a $2\times 3$ matrix $\la\phi\ra$ together with an element $\psi$ of $\bw^2F_0^*$. The element $\psi$  
corresponds to
$3\times 3$ alternating matrix $A$ as is described in \ref{87Not1} and \ref{2.1.3}. Thus, our data can be configured as
$$\left[\begin{array}{c|c}0_{2\times 2} &\la\phi\ra_{2\times 3}\\\hline\\[-10pt]
(-\la\phi\ra\transpose)_{3\times 2}& A_{3\times 3}\end{array}\right],$$which is Brown's data.\end{remark}
\begin{theorem}
\label{B is exact.} The maps and modules $\B$ 
 of Definition~{\rm\ref{5.3}} form a complex; furthermore, if the ideal $\im b_1$ of $R$ has grade at least three, then $\B$ is acyclic. \end{theorem}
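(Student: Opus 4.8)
The plan is to derive Theorem~\ref{B is exact.} from the already-established Theorem~\ref{Q acyclic} rather than by a direct computation. First I would observe that the complex $\B$ of Definition~\ref{5.3} is obtained from the complex $\Q$ of Definition~\ref{K} by the specialization of Data~\ref{3.1} in which one sets $\ell=0$ (equivalently, in the matrix version of Notation~\ref{3.5}, setting $\ell_{ij}=0$ for all $i,j$). Under that substitution every term of $q_1$, $q_2$, $q_3$ that involves $\ell$ or $\ell^*$ drops out, and what remains should, after the obvious identifications of the summands of $Q_\bullet$ with the summands of $B_\bullet$ (and perhaps dualizing/reindexing to match the way Brown displays her complex), be exactly $\Br_1$, $\Br_2$, $\Br_3$. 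So the first step is the purely bookkeeping verification that $\B = \Q|_{\ell=0}$; this is the step I expect to be the main obstacle, not because it is deep but because one must chase the coordinate-free formulas carefully — in particular the $B_0 = \bw^2G\t\bw^3F^*$ versus $Q_0=\bw^3F$ discrepancy means one is really comparing $\Q|_{\ell=0}$ with $\B$ after tensoring with the rank-one module $\bw^2G\t\bw^3F^*$ and using the fixed isomorphism $p$, and the summand $\bw^2G\t\bw^3F^*$ of $Q_1$ collapses appropriately once $\ell=0$ forces $\gen_2,\dots,\gen_5$ and the relevant columns of $q_2$ to simplify.

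Granting that identification, the complex assertion for $\B$ is immediate from the complex assertion in Theorem~\ref{Q acyclic}: a specialization of Data~\ref{3.1} (i.e.\ base change along $R_0[\Q]\to R$ sending $\ell_{ij}\mapsto 0$ and the other variables to their prescribed images) carries the complex $\Q$ to a complex, and that complex is $\B$. For acyclicity I would argue exactly as in the proof of Theorem~\ref{Q acyclic}. When the data of \ref{5.1} is represented by indeterminates over $\Z$, the complex $\B$ is, by the identification above together with Theorem~\ref{6.1} (or directly, since Brown's resolution is where $\Q$ came from by linkage), a resolution of a perfect module — indeed Brown proves in \cite[Prop.~3.6]{Br87} that in the generic case $\B$ resolves $R/I$ where $I$ is perfect of grade three. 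Then for an arbitrary $R$ with $\grade(\im b_1)\ge 3$, one invokes the Persistence of Perfection Principle (Theorem~\ref{PoPP}): $\B$ in the generic case is a projective resolution of a perfect module $M=R_{\mathrm{gen}}/I_{\mathrm{gen}}$ with $\grade(\ann M)=3$, the specialized module $R/\im b_1$ is nonzero, and its annihilator contains $\im b_1$ hence has grade at least $3 = \grade(\ann M)$, so $\B\t R$ remains a resolution.

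One subtlety to address explicitly: to apply Theorem~\ref{PoPP} I need that $\im b_1$, after specialization, has grade $\le$ the grade of the annihilator of the specialized module — which is automatic since $\im b_1 \subseteq \ann(R/\im b_1)$ — and I need $R/\im b_1\neq 0$, which holds because $\im b_1$ is a proper ideal (it has grade $\ge 3 \ge 1$). The hypothesis that $\grade(\im b_1)\ge 3$ is exactly what makes $\grade(\ann(R/\im b_1))\ge 3 = \pd$ of the generic module, so perfection transfers and $\B\t R$ is acyclic. Thus the whole proof reduces to the identification $\B = \Q|_{\ell=0}$ plus two citations (Theorem~\ref{Q acyclic} for the complex property in the generic setting, Theorem~\ref{PoPP} for descent of acyclicity), and I would write it in precisely that order: (1) set $\ell=0$ in $\Q$ and match formulas to get $\B$; (2) conclude $\B$ is a complex; (3) in the generic-over-$\Z$ case note $\B$ resolves a perfect grade-three module; (4) apply the Persistence of Perfection Principle.
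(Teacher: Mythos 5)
Your reduction collapses at its first step: the identification $\B=\Q|_{\ell=0}$ is false. If you set $\ell=0$ in Definition~\ref{K} (equivalently $\ell_{ij}=0$ in Notation~\ref{3.5}), every term of $q_1$ involving $\ell$ or $\ell^*$ disappears and, in the matrix form of Proposition~\ref{Q-matrix}, the generators become $\gen_1=-z_1z_2$, $\gen_5=-z_1^2$, and $\gen_2,\gen_3,\gen_4$ of the form $\mp z_2\cdot(\text{minor of }A)\pm z_1\psi_{jk}$; hence $\im q_1|_{\ell=0}\subseteq(z_1,z_2)$ has grade at most two. Brown's ideal $\im b_1=\big((U\Pi)_1,(U\Pi)_2,\ z_2\Delta_i-w_1\pi_i\big)$ has grade three when built generically, so the two complexes cannot be isomorphic after any identification of summands, and the specialized complex $\Q|_{\ell=0}$ is not even acyclic in the generic case. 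The actual passage from $\Q$ to $\B$ is the \emph{non-homogeneous} specialization $\Phi$ of Theorem~\ref{yes}, which sends $\ell_{11}\mapsto -1$ (a unit), $a_{11}\mapsto -z_2$, $a_{21}\mapsto -w_1-\pi_1$, etc.; this is forced by the structural difference that Brown's ideals have a Koszul relation among the minimal relations while those of $\Q$ do not, so no specialization sending variables to $0$ or to other variables (in particular no homogeneous one such as $\ell=0$) can carry $\Q$ to $\B$. Producing $\Phi$ is the main content of the paper, not a bookkeeping step available in advance.

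There is also a circularity in the direction of your deduction. In this paper the generic acyclicity of $\Q$ (Theorem~\ref{Q acyclic}) is itself obtained from the acyclicity of $\B$: Theorem~\ref{6.1} constructs $\Q$ by linking from $\B$, and the linkage argument in \ref{PLEASE WRITE ME} needs $\B$ to already be a resolution of a perfect grade-three cyclic module. So Theorem~\ref{B is exact.} is logically prior to Theorem~\ref{Q acyclic} and cannot be deduced from it here. The paper instead cites Brown's original proof \cite[Prop.~3.6]{Br87}, and sketches in Example~\ref{AltArg} an independent argument valid over any commutative Noetherian base ring: generically, $\im b_1$ is obtained by linkage from the hyperplane section $(\delta_0,\delta_1,\delta_2,\delta_3)$ of a grade-two almost complete intersection, whose resolution is classical, so $\B$ is acyclic in the generic case; the reduction of the general case to the generic one via the Persistence of Perfection Principle (Theorem~\ref{PoPP}) is carried out exactly as you describe in your last paragraph --- that part of your proposal is fine, but it rests on a generic-case input that your first step does not supply.
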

\begin{proof}
This result is \cite[Prop.~3.6]{Br87}. An alternative argument which explicitly works over any commutative Noetherian base ring is sketched in Example~\ref{AltArg}. As in the proof of Theorems~\ref{Q acyclic} and \ref{W acyclic} it suffices to prove that $\B$ is acyclic when the data of \ref{5.1} is sufficiently general. 
\end{proof}

\subsection{Write $\B$ using matrices.}
We first fix bases and write the data of \ref{5.1} in terms of matrices. \begin{notation}\label{3.5FIX} 
Adopt the data of \ref{5.1}.
 Fix dual bases $\{e_i\}$ and $\{e_i^*\}$  for $F$ and $F^*$, respectively;
dual bases $\{\beta_i\}$ and $\{\beta_i^*\}$   for $G$ and $G^*$, respectively; dual bases $$\Omega_F=e_1\w e_2\w e_3\quad  \text{and}\quad \Omega_{F^*}=e_3^*\w e_2^*\w e_1^*$$  for $\bw^3F$ and $\bw^3F^*$, respectively; and dual bases
$$\Omega_G=\beta_1\w \beta_2\quad\text{and}\quad \Omega_{G^*}=\beta_2^*\w \beta_1^*$$ for $\bw^2G$ and $\bw^2G^*$, respectively. 

Let $\phi$ have matrix 
$$U=\bmatrix u_{11}&u_{12}&u_{13}\\ u_{21}&u_{22}&u_{23}\endbmatrix$$ with respect to the above bases; in particular,
$$\phi(e_j)=u_{1j}\beta_1+u_{2j}\beta_2.$$ Let 
$\zeta=w_1\beta_1\w \beta_2$, $\psi=\pi_1 e_2^*\w e_3^*-\pi_2 e_1^*\w e_3^*+\pi_3 e_1^*\w e_2^*$,
 $$\Pi=\bmatrix \pi_1&\pi_2&\pi_3\endbmatrix\transpose,$$ 
and $\Delta_i=(-1)^{i+1}$ times the determinant of $U$ with column $i$ deleted.
\end{notation}

\begin{definition}
\label{5.4} Retain the notation of {\rm\ref{3.5FIX}}. If
$$S=\{u_{ij}|1\le i\le 2, 1\le j\le 3\}\cup\{\pi_{i}|1\le i\le  3\}\cup \{w_1,z_2\}$$are indeterminates 
over the commutative Noetherian  ring $R_0$ and $R$ is the polynomial ring $R_0[S]$, then one says that $\B$ is {\it built using variables over} $R_0$ and one refers to $R$ as $R_0[\B]$.
\end{definition}

\begin{proposition}\label{B-matrix}
 When the notation of {\rm\ref{3.5FIX}} is used, then the complex $\B$ of 
Definition~{\rm\ref{5.3}} has the  form
$$0\to R^2\xrightarrow{\la b_3\ra} R^6\xrightarrow{\la b_2\ra}R^5\xrightarrow{\la b_1\ra} R,$$
where
$$\la b_3\ra=\begin{array}{|c||c|c|}\hline
&\Omega_{G^*}\t \Omega_F&-\Omega_F\\\hline\hline
\Omega_F&w_1&z_2\\\hline
-\beta_2^*&(U\Pi)_1&0\\\hline
\beta_1^*&(U\Pi)_2&0\\\hline
e_1&\Delta_1&\pi_1\\\hline
e_2&\Delta_2&\pi_2\\\hline
e_3&\Delta_3&\pi_3\\\hline\end{array}\ ,$$
$$\la b_2\ra =\begin{array}{|c|c|c|c|c|c|c|}\hline
&\Omega_F&-\beta_2^*&\beta_1^*&e_1&e_2&e_3\\\hline\hline
-\beta_2&(U\Pi)_2&0&-w_1&-z_2u_{21}&-z_2u_{22}&-z_2u_{23}\\\hline
\beta_1&-(U\Pi)_1&w_1&0&z_2u_{11}&z_2u_{12}&z_2u_{13}\\\hline
e_1^*&0&u_{21}&-u_{11}&0&-\pi_3&\pi_2\\\hline
e_2^*&0&u_{22}&-u_{12}&\pi_3&0&-\pi_1\\\hline
e_3^*&0&u_{23}&-u_{13}&-\pi_2&\pi_1&0\\\hline\end{array}\ ,$$
and $\la b_1\ra$ is equal to
$$\begin{array}{|c|c|c|c|c|c|}\hline
&-\beta_2&\beta_1&e_1^*&e_2^*&e_3^*\\\hline\hline
\Omega_G\t\Omega_{F^*}&(U\Pi)_1& (U\Pi)_2&  z_2 \Delta_1-w_1\pi_1& 
z_2 \Delta_2-w_1\pi_2& 
z_2\Delta_3-w_1\pi_3\\\hline\end{array}\ .$$
\end{proposition}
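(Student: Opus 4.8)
The plan is a direct computation: evaluate each of the three coordinate‑free maps $b_3,b_2,b_1$ of Definition~\ref{5.3} on the ordered bases fixed in Notation~\ref{3.5FIX} and expand the results using the module‑action conventions of Section~\ref{Prelims} (notably \eqref{2.4.1} and Proposition~\ref{A3}). The assertion about the format $0\to R^2\to R^6\to R^5\to R$ is immediate, since $\rank\bw^2 G^*=\rank\bw^3 F=\rank\bw^2 G=\rank\bw^3 F^*=1$ while $\rank G=2$ and $\rank F=3$, so that $\rank B_3=1+1=2$, $\rank B_2=1+2+3=6$, $\rank B_1=2+3=5$, and $\rank B_0=1$.

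The substance amounts to a short list of contractions that recur throughout, which I would record first: (i) $\psi(\Omega_F)\in F$, which with the present orientation conventions equals, up to an overall sign, $\pi_1 e_1+\pi_2 e_2+\pi_3 e_3$, so that $\phi(\psi(\Omega_F))$ is (again up to sign) $(U\Pi)_1\beta_1+(U\Pi)_2\beta_2$; (ii) $(\bw^2\phi^*)(\Omega_{G^*})=\phi^*(\beta_2^*)\w\phi^*(\beta_1^*)\in\bw^2 F^*$, whose coefficient on $e_i^*\w e_j^*$ is the corresponding $2\times 2$ minor of $U$, i.e.\ (up to sign) one of the $\Delta_k$; and (iii) the elementary contractions $\phi^*(\beta_i^*)\in F^*$, $\beta_i(\Omega_{G^*})\in G^*$, $e_i(\psi)\in F^*$, $e_i^*\w\psi\in\bw^3 F^*$, together with the scalars $\zeta(\g_2)$ and $\g_1(\zeta)$. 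Feeding the basis of $B_3$ through $b_3$ then produces the column $(w_1,(U\Pi)_1,(U\Pi)_2,\Delta_1,\Delta_2,\Delta_3)\transpose$ from $\Omega_{G^*}\t\Omega_F$ and $(z_2,0,0,\pi_1,\pi_2,\pi_3)\transpose$ from $-\Omega_F$; feeding the basis of $B_2$ through $b_2$ produces the six columns of $\la b_2\ra$, where the summands $\g_1(\zeta)$, $z_2\phi(f_1)$, and $\phi(\psi(f_3))$ account for the two $G$‑rows and the summands $-\phi^*(\g_1)$ and $f_1(\psi)$ account for the three $F^*$‑rows (the latter contributing the alternating $3\times 3$ block built from $\Pi$); and feeding the basis of $B_1$ through $b_1$ produces the five entries of $\la b_1\ra$, the first term contributing the entries $(U\Pi)_1,(U\Pi)_2$ on $-\beta_2,\beta_1$, and the second and third terms contributing $z_2\Delta_i$ and $-w_1\pi_i$ on $e_i^*$.

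The only real obstacle is sign bookkeeping, since several independent conventions feed in: the reversed orderings $\Omega_{F^*}=e_3^*\w e_2^*\w e_1^*$ and $\Omega_{G^*}=\beta_2^*\w\beta_1^*$ (chosen dual to $\Omega_F$ and $\Omega_G$); the signs internal to $\psi=\pi_1 e_2^*\w e_3^*-\pi_2 e_1^*\w e_3^*+\pi_3 e_1^*\w e_2^*$ and to $\Delta_i=(-1)^{i+1}\det(U\text{ with column }i\text{ deleted})$; the use of $-\beta_2^*$, $-\beta_2$, $-\Omega_F$ (rather than $\beta_2^*$, $\beta_2$, $\Omega_F$) as the row/column labels in the displayed matrices; and the Koszul signs governing the module actions, e.g.\ $\g(a\w b)=\g(a)\,b-\g(b)\,a$ for $\g\in V^*$ and $a,b\in V$. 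Once these are pinned down consistently, every entry of the three matrices falls out mechanically. As a consistency check (not logically needed, since it also follows from Theorem~\ref{B is exact.}) one verifies directly that $\la b_2\ra\la b_3\ra=0$ and $\la b_1\ra\la b_2\ra=0$.
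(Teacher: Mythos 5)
Your proposal is correct and coincides with what the paper intends: the paper states Proposition~\ref{B-matrix} without any written proof, treating it as exactly the routine verification you describe, namely evaluating $b_3$, $b_2$, $b_1$ of Definition~\ref{5.3} on the bases of Notation~\ref{3.5FIX} and expanding the contractions $\psi(\Omega_F)$, $(\bw^2\phi^*)(\Omega_{G^*})$, $\phi^*(\beta_i^*)$, $e_i(\psi)$, $\g_1(\zeta)$, etc., with the signs fixed by the convention that makes $\Omega_{F^*}=e_3^*\w e_2^*\w e_1^*$ and $\Omega_{G^*}=\beta_2^*\w\beta_1^*$ dual to $\Omega_F$ and $\Omega_G$. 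Spot-checking entries (for instance $\phi(\psi(\Omega_F))=-\big((U\Pi)_1\beta_1+(U\Pi)_2\beta_2\big)$ against the $\Omega_F$ column, and the $e_i^*$ entries $z_2\Delta_i-w_1\pi_i$ of $\la b_1\ra$) confirms your outline reproduces the stated matrices, so no further comment is needed.
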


\begin{remark} \label{5.9}
Adopt the notation of \ref{3.5FIX}. 
If $R$ is a bi-graded ring with $$u_{ij} \in R_{(1,0)},\quad
w_1\in R_{(2,0)}, \quad\text{and}\quad \pi_{i},\ z_2\in R_{(0,1)},$$ 
then  $\B$  is the bi-homogeneous complex
$$0\to \begin{matrix}R(-4,-2)\\\p\\ R(-2,-3)\end{matrix}\xrightarrow{\ \ \la b_3\ra\ \ } \begin{matrix}R(-2,-2) \\\p\\ R(-3,-1)^2\\\p\\
R(-2,-2)^3
\end{matrix}\xrightarrow{\ \ \la b_2\ra\ \ } \begin{matrix}R(-1,-1)^2\\\p\\ R(-2,-1)^3\end{matrix}\xrightarrow{\ \ \la b_1\ra\ \ } R.$$
\end{remark}

\subsection{The map which carries $\Q$ onto $\B$.}
\begin{theorem}
\label{yes}
If the complexes $\Q$ and $\B$ of Definitions {\rm\ref{K}} and {\rm\ref{5.3}} are both  
built using variables over 
 the commutative Noetherian ring $R_0$, in the sense of {\rm\ref{3.5.1}} and {\rm\ref{5.4}}, then there is a surjective $R_0$-algebra homomorphism $\Phi:R_0[\Q]\to R_0[\B]$ such that the complexes $\Q\t_{R_0[\Q]} R_0[\B]$ and $\B$ are isomorphic. 
\end{theorem}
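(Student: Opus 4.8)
The plan is to prove Theorem~\ref{yes} in exactly the style of Theorem~\ref{K-to-CKLW}: write down the algebra map $\Phi$ on generators, check it is surjective, and then exhibit a ladder of invertible matrices realizing the isomorphism of complexes $\Q\t_{R_0[\Q]}R_0[\B]\cong\B$. The heuristic for guessing $\Phi$ is the one described in the introduction — pass from $\Q$ to its incarnation $\M$ via the map $\mu$ of Theorem~\ref{K-to-CKLW}, set Weyman's variable ``$y_{12}$'' equal to $1$, and perform invertible row and column operations on the resulting complex until it is in Brown's normal form — but for the write-up it is cleanest to record the end product and verify it directly.

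First I would specify $\Phi:R_0[\Q]\to R_0[\B]$ on the fourteen generators $\{a_{ij},\ell_{ij},\psi_{12},\psi_{13},\psi_{23},z_1,z_2\}$. The four packets of data that $\Q$ and $\B$ have in common — the map $\phi$, the form $\psi$, the element $\zeta$, and the scalar $z_2$ — transfer in an almost unchanged, essentially linear (up to signs) fashion, so that $\Phi(a_{ij})=\pm u_{ij}$, $\Phi(\psi_\bullet)=\pm\pi_\bullet$, $\Phi(z_2)=z_2$, and $\Phi(z_1)=\pm w_1$ corrected by a quadratic expression in the $u_{ij}$ and $\pi_i$; meanwhile one variable — the avatar of $y_{12}$ — is sent to $1$, which is precisely what makes the specialization non-homogeneous, and the remaining entries of the matrix $L$ of the extra homomorphism $\ell:F^*\to G$, which has no analogue in Brown's data, absorb the brunt of the specialization and are sent to explicit polynomials in $u_{ij},\pi_i,w_1,z_2$. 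Surjectivity of $\Phi$ is then immediate: each of the eleven generators of $R_0[\B]$ is, up to sign, the image under $\Phi$ of a single generator of $R_0[\Q]$.

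With $\Phi$ pinned down, set $R=R_0[\B]$; the complex $\Q\t_{R_0[\Q]}R$ has differentials $\Phi(\la q_1\ra),\Phi(\la q_2\ra),\Phi(\la q_3\ra)$, read off from Proposition~\ref{Q-matrix} and Tables~\ref{q_2ell} and~\ref{q_2r}. I would then produce invertible matrices $\tau_0\in\mathrm{GL}_1(R)$, $\tau_1\in\mathrm{GL}_5(R)$, $\tau_2\in\mathrm{GL}_6(R)$, $\tau_3\in\mathrm{GL}_2(R)$ with $\tau_{i-1}\,\Phi(\la q_i\ra)=\la b_i\ra\,\tau_i$ for $i=1,2,3$, where $\la b_i\ra$ is taken from Proposition~\ref{B-matrix}. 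These are exactly the ``invertible row and column operations'' of the introduction: after the $\ell_{ij}$-specialization most of Brown's variables occur as \emph{linear} entries of the middle matrix $\Phi(\la q_2\ra)$, so one obtains $\tau_1$ and $\tau_2$ by maneuvering $\Phi(\la q_2\ra)$ into the block form of $\la b_2\ra$, and then $\tau_0$ (a scalar unit of $R_0$) and $\tau_3$ are forced by the two outer squares. Checking that all three squares commute is then a direct multiplication of explicit matrices over $R$, using Propositions~\ref{Q-matrix} and~\ref{B-matrix} — bookkeeping rather than insight. This yields $\Q\t_{R_0[\Q]}R\cong\B$ as complexes of $R$-modules and finishes the proof.

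I expect the only genuine difficulty to lie in \emph{finding} the formulas — the values $\Phi(\ell_{ij})$, including the non-homogeneous piece, and the entries of the $\tau_i$ — rather than in any structural argument: there is no shortcut around actually carrying out the reduction of $\Phi(\la q_2\ra)$ to Brown's normal form, which is why this part of the work was done with computer assistance at the workshop acknowledged above. One could alternatively obtain the bare \emph{existence} of $\Phi$ less explicitly, by invoking Brown's Theorem on the zeroth homology of $\Q\t_{R_0[\Q]}R_0[\M]/(y_{12}-1)$ as sketched in the introduction; but since $R_0[\B]$ involves only eleven variables while $R_0[\M]/(y_{12}-1)$ involves sixteen, further coordinate changes are unavoidable, and it is cleaner in the end to write $\Phi$ and the $\tau_i$ down outright and verify them.
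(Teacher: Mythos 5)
Your strategy is the same as the paper's: the published proof of Theorem~\ref{yes} consists precisely of an explicit list of the values of $\Phi$ on the thirteen generators of $R_0[\Q]$ together with explicit invertible matrices $J_1,J_2,J_3$ making the three squares commute, i.e.\ the ladder you describe. So there is no disagreement about the route.

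The gap is that this route has essentially no structural content --- the whole proof \emph{is} the explicit specialization and the comparison matrices --- and you have produced neither; worse, the shape you predict for $\Phi$ is not the shape that works, so the reduction you defer to ``bookkeeping'' would not go through from your template. In the actual map the common ``packets of data'' do not transfer essentially unchanged: five variables are killed, $\Phi(a_{12})=\Phi(a_{13})=\Phi(\ell_{12})=\Phi(\ell_{13})=\Phi(\ell_{21})=0$; the unit lands on $\ell_{11}$, namely $\Phi(\ell_{11})=-1$; the roles of the packets are permuted, with $\Phi(a_{11})=-z_2$, $\Phi(z_2)=u_{21}$, $\Phi(\psi_{23})=-u_{11}$; one has $\Phi(z_1)=w_1$ exactly, with no quadratic correction, while the only correction term, $-w_1-\pi_1$, sits on $a_{21}$; and every value of $\Phi$ is a constant or a linear form, so the $\ell_{ij}$ do not ``absorb the brunt'' as higher-degree polynomials. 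Your surjectivity argument also needs repair: $\pi_1$ is not, up to sign, the image of a single generator, since one must take $\pi_1=-\Phi(a_{21}+z_1)$ (surjectivity still holds, but not for the reason you give). In short, the one genuinely nontrivial step --- identifying the correct, rather scrambled, specialization and the matrices $J_1,J_2,J_3$ --- is exactly the step left open, and the partial guesses offered in its place are inconsistent with the map that the paper verifies.
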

\begin{proof}
Let $R=R_0[\B]$. Define $\Phi:R_0[\Q]\to R_0[\B]$ by 
$$\Phi(a_{12})= \Phi(a_{13})=\Phi(\ell_{13})=\Phi(\ell_{12})=\Phi(\ell_{21})=0,$$
\begin{align*}
&\Phi(a_{11})=-z_2,&&
\Phi(\ell_{11})= -1,&&
\Phi(a_{21})=-w_1-\pi_1,&&
\Phi(a_{22})= u_{12},\\
&\Phi(a_{23})=  u_{13},&&
 \Phi(\ell_{23})= -u_{22},&&
\Phi(\ell_{22})= u_{23},&&
\Phi(z_1)= w_1,\\
&\Phi(\psi_{12})= \pi_{3},&&
\Phi(\psi_{13})= -\pi_{2},&&
\Phi(\psi_{23})= -u_{11},&&
\Phi(z_2)=u_{21}.\\
\end{align*}
\noindent Observe that 
\begingroup\allowdisplaybreaks
\begin{align*}
&\Phi(A)=\bmatrix -z_2&0&0\\ -w_1-\pi_1&u_{12}&u_{13}\endbmatrix,
&&\Phi(L)=\bmatrix -1&0&0\\ 0&u_{23}&-u_{22}\endbmatrix,
\\
&\Phi(P)=\bmatrix -u_{11}\\\pi_2\\ \pi_3\endbmatrix,
&&\Phi\left(\sum_{i=1}^3\left|\begin{matrix} a_{1i}&\ell_{1i}\\a_{2i}&\ell_{2i}\end{matrix}\right|\right)=-w_1-\pi_1,
\end{align*}\begin{align*}
&\Phi(AL\transpose)=\bmatrix z_2&0\\w_1+\pi_1& u_{12}u_{23}-u_{13}u_{22}\endbmatrix,\quad\text{and}\\
&\Phi(P\transpose A\transpose)=\bmatrix u_{11}z_2&u_{11}(w_1+\pi_1)+u_{12}\pi_2+u_{13}\pi_3\endbmatrix.\end{align*}\endgroup
Observe further, that 
$$\xymatrix{0\ar[r]&R^2\ar[rr]^{\la b_3\ra}\ar[d]^{J_3}&&R^6\ar[rr]^{\la b_2\ra}\ar[d]^{J_2^{-1}}&&R^5\ar[rr]^{\la b_1\ra}\ar[d]^{J_1}&&R\ar[d]^{=}\\
0\ar[r]&R^2\ar[rr]^{\Phi(\la q_3\ra)}&&R^6\ar[rr]^{\Phi(\la q_2\ra)}&&R^5\ar[rr]^{\Phi(\la q_1\ra)}&&R}$$ is an isomorphism of complexes with
$$J_1=\left[\begin{array}{cc|ccc} 0&1&0&0&0\\1&0&0&0&0\\\hline 0&0&0&-1&0\\0&0&0&0&-1\\
0&0&-1&0&0\end{array}\right], \quad 
J_2=
\bmatrix -1  &0 &0 &0  &0  &0 \\
        0   &0 &0 &-1 &0  &0 \\
        u_{11} &1 &0 &0  &0  &0 \\
        u_{21} &0 &1 &0  &0  &0 \\
        0   &0 &0 &0  &0  &1 \\
        0   &0 &0 &0  &-1 &0 \endbmatrix,\quad\text{and}$$
$$J_3=\bmatrix 0&-1\\-1&0\endbmatrix.$$

\vskip-18pt
\end{proof}

\section{The linkage history of $\Q$.}\label{6}

\begin{example}
\label{AltArg} 
Let $R_0$ be a commutative Noetherian domain and let $\B$ be the resolution
of Proposition~\ref{B-matrix} built over $R_0$ using bi-graded variables as described in Definition~\ref{5.4} and  Remark~\ref{5.9}.
We 
 give the linkage history of  
$\B$ 
 and thereby give an alternative proof of Theorem~\ref{B is exact.}. This linkage history also plays a crucial role in our discussion of rigidity in Section~\ref{Rigidity}. 
Let  \begin{equation}\label{6.1.0}R=R_0[\{\lambda_{ij}|1\le i\le 3,\ 1\le j\le 2\}\cup\{\delta_0,\a_1,\a_2,\a_3,\beta\}]\end{equation} be a 
bigraded polynomial ring.
The variables of $R$ have the following degrees:
$$\a_1,\a_2,\lambda_{ij}\in R_{10},\text{ for $1\le i,j\le 2$};\quad
\a_3\in R_{20}; \quad\text{and}\quad \beta,\delta_0,\lambda_{31},\lambda_{32}\in R_{01}.
$$ (This choice of bi-grading is compatible with the bi-grading in (\ref{3.10}), (\ref{4.4}), and (\ref{5.9}). On the other hand, it can be ignored with no damage done.)
Let $\Lambda$ be the $3\times 2$ matrix $(\lambda_{ij})$,  
$\delta=[\delta_1,\delta_2,\delta_3]$ be the row vector of signed maximal minors of $\Lambda$ with $\delta \Lambda=0$.
The ideal $(\delta_0,\delta_1,\delta_2,\delta_3)$ is a hyperplane section of a grade two almost complete intersection. This ideal is perfect of grade three and the resolution of the quotient ring it defines
\begin{equation}\label{6.1.1}0\to R(-2,-2)^2\to \begin{matrix} R(-1,-2)^2\\\p\\R(-2,-1)^3\end{matrix}
\to \begin{matrix} R(0,-1)\\\p\\ R(-1,-1)^2\\\p\\R(-2,0)\end{matrix} \to R
\end{equation}
is well known. 
Let
 $N$ be the product 
$$N=\bmatrix \delta_0&\delta_1&\delta_2&\delta_3\endbmatrix \bmatrix \a_1&\a_2&\a_3\\1&0&0\\0&1&0\\0&0&\beta\endbmatrix.$$ 
Recall, from Example~\ref{2.12}, 
that the entries of $N$ form a regular sequence, denoted $n_1,n_2,n_3$. 
The technique of linkage (see, for example, \ref{PLEASE WRITE ME}) yields that 
\begin{align*}&(n_1,n_2,n_3): (\delta_0,\delta_1,\delta_2,\delta_3)\\
{}={}&(n_1,n_2,n_3,\lambda_{11}\a_{1}\beta+\lambda_{21}\a_{2}\beta+\lambda_{31}\a_{3},
 \lambda_{12}\a_{1}\beta+\lambda_{22}\a_{2}\beta+\lambda_{32}\a_{3}).\end{align*}
Observe that  the bi-homogeneous $R_0$-algebra isomorphism $\blop:R\to R_0(\B)$ with  $\blop(\lambda_{ij})= u_{ij}$, for $1\le i,j\le 2$,
\begin{align*}
&\blop(\lambda_{31})=\pi_2,&&
 \blop(\lambda_{32})=-\pi_1,&&
\blop(\delta_0) =-\pi_3,&&
\blop(\a_1) =u_{23},&&\\
&\blop(\a_2) =-u_{13},&&
\blop(\beta) =z_2,\text{ and}&&
\blop(\a_3) =w_1,
\end{align*}satisfies $$\blop(n_1,n_2,n_3,\lambda_{11}\a_{1}\beta+\lambda_{21}\a_{2}\beta+\lambda_{31}\a_{3},\\
 \lambda_{12}\a_{1}\beta+\lambda_{22}\a_{2}\beta+\lambda_{32}\a_{3})
=\im \la b_1\ra.$$
Thus, the resolution $\B$ is constructed from the resolution (\ref{6.1.1}) by linkage. 
\end{example}

\begin{theorem} 
\label{6.1}
The resolution $\Q$ of Definition~{\rm\ref{K}} may be obtained from the resolution $\B$ of  Definition~{\rm\ref{5.3}} by linkage.\end{theorem}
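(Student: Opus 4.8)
The plan is to mimic Example~\ref{AltArg}, but one rung higher on the linkage ladder: rather than linking the resolution~(\ref{6.1.1}) of a hyperplane section of a grade two almost complete intersection, I link the ideal $I_\B$ resolved by $\B$ itself. Build $\B$ over $R:=R_0[\B]$ as in Proposition~\ref{B-matrix} and let $g_1,\dots,g_5$ be the five entries of $\la b_1\ra$, so that $I_\B:=\im\la b_1\ra=(g_1,\dots,g_5)$; order them so that $g_1=(U\Pi)_1$ and $g_2=(U\Pi)_2$ are the two ``$G$-components'' (the ones carrying the minimal Koszul relation demanded by Brown's hypothesis) and $g_3,g_4,g_5$ are the ``$F^*$-components''. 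Adjoin a $2\times 3$ matrix $X=(x_{ij})$ of new indeterminates, put $\Rtil:=R[X]$, and let $n_1,n_2,n_3$ be the entries of the $1\times 3$ product $\bmatrix g_1&\cdots&g_5\endbmatrix\left[\begin{smallmatrix}X\\ I_3\end{smallmatrix}\right]$, so that $n_j=x_{1j}g_1+x_{2j}g_2+g_{2+j}$. Example~\ref{AltArg} shows that $\B$ is acyclic and $I_\B$ is perfect of grade three over the domain $R$; hence Proposition~\ref{7.3}, applied with $n=5$, $c_1=3$, and $c_2=0$, shows that $n_1,n_2,n_3$ form a regular sequence on $\Rtil$. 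Set $K=(n_1,n_2,n_3)\subseteq I_\B\Rtil$, a complete intersection of grade three.

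By the linkage machinery recalled in~\ref{PLEASE WRITE ME}, $K:I_\B\Rtil$ is perfect of grade three, and it is resolved by the dual of the mapping cone of a comparison map $\alpha\colon\mathbb K\to\B\t_R\Rtil$ lifting $\Rtil/K\twoheadrightarrow\Rtil/I_\B\Rtil$, where $\mathbb K$ is the Koszul complex on $n_1,n_2,n_3$. I would take $\alpha_0=\id$ and $\alpha_1=\left[\begin{smallmatrix}X\\ I_3\end{smallmatrix}\right]$, the matrix expressing the $n_j$ in terms of the $g_i$; then $b_1\alpha_1=\bmatrix n_1&n_2&n_3\endbmatrix$ is the first Koszul differential, so $\alpha_1k^{\mathbb K}_2$ and $\alpha_2k^{\mathbb K}_3$ land in $\im\la b_2\ra$ and $\im\la b_3\ra$, and dividing by the explicit matrices of Proposition~\ref{B-matrix} yields explicit lifts $\alpha_2\colon\mathbb K_2\to B_2$ and $\alpha_3\colon\mathbb K_3\to B_3$. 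Because $\alpha_0=\id$ and $\alpha_1$ contains a $3\times 3$ identity block, the dual mapping cone carries a free direct summand of rank $1+3=4$ on which the differential is invertible; cancelling it leaves a complex $\mathbb D$ of shape $0\to\Rtil^2\to\Rtil^6\to\Rtil^5\to\Rtil$.

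Bookkeeping the surviving summands — and identifying $\mathbb K_1$ with $F^*$ — gives $\mathbb D_0=(\bw^3\mathbb K_1)^*=\bw^3F$, $\mathbb D_1=B_3^*\p(\bw^2\mathbb K_1)^*=(\bw^2G\t\bw^3F^*)\p\bw^3F^*\p\bw^2F$, $\mathbb D_2=B_2^*=\bw^3F^*\p G\p F^*$, and $\mathbb D_3$ equal to the copy of $G^*$ surviving in $B_1^*=G^*\p F$: precisely the modules $Q_0,Q_1,Q_2,Q_3$ of Definition~\ref{K}. What remains is to pin down an $R_0$-algebra isomorphism $R_0[\B][X]\xrightarrow{\ \sim\ }R_0[\Q]$ — both sides are polynomial $R_0$-algebras on $11+6=17$ variables, the six new variables $x_{ij}$ going, up to an affine change, onto the six $\Q$-variables $a_{12},a_{13},\ell_{11},\ell_{12},\ell_{13},\ell_{21}$ that the homomorphism $\Phi$ of Theorem~\ref{yes} kills or normalizes, and the eleven variables of $R_0[\B]$ onto the remaining $\Q$-variables via the relations implicit in $\Phi$ — and then to carry out a sequence of invertible row and column operations identifying $\mathbb D$ with the matrices $\la q_1\ra$, $\la q_2\ra$, $\la q_3\ra$ of Proposition~\ref{Q-matrix} and Tables~\ref{q_2ell} and~\ref{q_2r}. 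Under this identification $K:I_\B\Rtil$ becomes $\im q_1$, which is the assertion of the theorem; and since $\mathbb D$ is acyclic by construction, this simultaneously gives the acyclicity of $\Q$ (extended to an arbitrary base via Theorem~\ref{PoPP}) that is invoked in the proof of Theorem~\ref{Q acyclic}.

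The main obstacle is this last matching step: producing the comparison maps $\alpha_2,\alpha_3$ explicitly and then finding the precise change of basis and change of variables that carry the simplified dual mapping cone exactly onto the (rather intricate) formulas of Proposition~\ref{Q-matrix}. This is the same flavor of computation as in the proofs of Theorems~\ref{yes} and~\ref{K-to-CKLW}, run in the ``un-specialized'' direction; the creative input is the product-matrix choice of $K$ and reading off the change-of-basis matrices, which is feasible because — as in Theorem~\ref{yes} — most of the variables appear linearly in the middle differential. Alternatively, one could run the entire argument coordinate-free, staying inside Definitions~\ref{K} and~\ref{5.3} and doing the multilinear manipulations with Proposition~\ref{A3} and~(\ref{2.6.1}); that is conceptually cleaner but the identification step is no shorter.
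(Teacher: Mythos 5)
Your construction is the paper's own: there, too, one fixes a generic $\ell\colon F^*\to G$, takes the complete intersection generated by the entries of $p\circ b_1\circ c_1$ with $c_1=\left[\begin{smallmatrix}\ell\\ \id\end{smallmatrix}\right]\colon F^*\to G\p F^*$ (exactly your $n_j=x_{1j}g_1+x_{2j}g_2+g_{2+j}$, with Proposition~\ref{7.3} supplying the regular-sequence claim), forms the dual of the mapping cone of a comparison map from the Koszul complex $\bw^{\bullet}F^*$ to $\B$, and cancels the split summand of total rank $1+3$; your bookkeeping of the surviving summands against $Q_0,\dots,Q_3$ is exactly right, as is the remark that this is what feeds the acyclicity argument for Theorem~\ref{Q acyclic}.

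Two caveats. First, the step you defer is essentially the entire content of the paper's proof: it exhibits the lifts $c_2$ and $c_3$ explicitly (in coordinate-free form), chosen so that removing the split summand yields the maps of Definition~\ref{K} directly; until those lifts (your $\alpha_2,\alpha_3$) are written down and the identification checked, the argument is a program rather than a proof. Second, your proposed dictionary of variables is aimed in the wrong direction. Data~\ref{3.1} is Data~\ref{5.1} together with the one extra homomorphism $\ell\colon F^*\to G$, so $R_0[\Q]$ is $R_0[\B]$ with the six entries of $\ell$ adjoined: the correct identification sends your new $x_{ij}$ to the $\ell_{ij}$ (up to signs dictated by the chosen bases) and the eleven $\B$-variables to $a_{ij},\psi_{ij},z_1,z_2$ by mere renaming ($u_{ij}\mapsto a_{ij}$, $\pi_i\mapsto\pm\psi_{jk}$, $w_1\mapsto z_1$); no further affine change is needed, and no row or column manoeuvring beyond the splitting-off itself. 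The map $\Phi$ of Theorem~\ref{yes} is a non-injective specialization from the $17$-variable ring onto the $11$-variable ring; the six variables it kills or normalizes ($a_{12},a_{13},\ell_{11},\ell_{12},\ell_{13},\ell_{21}$) are unrelated to the six new linkage coefficients, so trying to steer the matching computation by ``inverting $\Phi$'' would lead it astray, whereas the natural identification above makes the final comparison a routine (if lengthy) verification.
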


\begin{proof}
Start with the resolution $\B$ of Definition~\ref{5.3} (and the data and notation of \ref{5.1} and \ref{5.2}).
Fix a homomorphism $\ell:F^* \to G$.  
Define $$c_1=\bmatrix \ell\\ \id\endbmatrix : F^*\to B_1=G\p F^*.$$
Recall, from Proposition~\ref{7.3},
that if $\ell$ is sufficiently general, then the image of $p\circ b_1\circ c_1$ 
is a grade three complete intersection ideal in $R$. 
Define $R$-module homomorphisms $c_2$ and $c_3$: 
\begin{equation}\label{c}\xymatrix{
0\ar[r]& \bw^3F^* \ar[rr]^{p\circ \Br_1\circ c_1}\ar[d]^(.4){c_3}&&
 \bw^2 F^* \ar[rr]^(.55){p\circ \Br_1\circ c_1}\ar[d]^(.4){c_2}&&F^*  \ar[rr]^(.5){p\circ\Br_1\circ c_1}\ar[d]^(.4){c_1}&&R\ar[d]^(.4){p^{-1}}
\\
0\ar[r]& B_3 \ar[rr]^{\Br_3}&&
B_2 \ar[rr]^{\Br_2}&&B_1 \ar[rr]^(.5){\Br_1}&&B_0}\end{equation}
by 
$$c_3(\mff_3)=\bmatrix 
-p((\bw^2\ell)(\psi)\t \mff_3)\cdot p(\chi_G\t \chi_{F^*})\cdot \chi_{G^*}\t \chi_F\\
-z_2\cdot p\left(
\sum_i[\phi(E_i)\w \ell (E_i^*)]\t \mff_3)\right) \cdot p(\chi_G\t \chi_{F^*})\cdot 
  \chi_{G^*}\t\chi_F\\
-z_2\cdot p(\zeta\t \mff_3)
\cdot  p(\chi_G\t \chi_{F^*})\cdot \chi_{G^*}\t  \chi_F\\
\hline
+p(\chi_G\t \chi_{F^*})\cdot p\Big([\ts\bw^2(\phi\circ \ell^*)]
(\chi_{G^*})\t \mff_3\Big)
\cdot \chi_F\\
-p(\zeta\t \mff_3)\cdot p\left(\sum_i[\phi(E_i)\w \ell(E_i^*)]\t \chi_{F^*}\right) \cdot \chi_F\\
- p(\zeta\t \mff_3)
\cdot p(\zeta\t \chi_{F^*})\cdot \chi_F
\endbmatrix$$
in $B_3=(\bw^2G^*\t \bw^3F)\p\bw^3F$ and
$c_2(\mff_2)$ is equal to$$
p(\chi_G\t \chi_{F^*})\cdot \bmatrix
-[(\bw^2 \ell)(\mff_2)](\chi_{G^*})\cdot \chi_{F}
\\
\hline 
+\Big[\ell \Big(
[\psi(\chi_F)](\mff_2)\Big)\Big](\chi_{G^*})
\\
+z_2\cdot  [\phi(\mff_2(\chi_F))](\chi_{G^*})
\\\hline
-\sum_i[\phi(E_i)\w \ell (E_i^*)](\chi_{G^*})\cdot \mff_2
(\chi_F)
- \ell^*\Big(\big(\phi[\mff_2(\chi_F)]\big) (\chi_{G^*})\Big)\\
-  \zeta(\chi_{G^*})\cdot \mff_2(\chi_F)
\endbmatrix$$
in $B_2=\bw^3 F\p G^*\p F$, for $\mff_i\in \bw^iF^*$. We used $\sum E_i\t E_i^*$  as the canonical element of $F\t F^*$ in the sense of \ref{2.3}. (The map $p\circ b_1\circ c_1:F^*\to R$ is an element of $F$, and $\bw^\bullet F^*$ is a module over $\bw^\bullet F$; hence $p\circ b_1\circ c_1:\bw^iF^*\to \bw^{i-1}F^*$ is a meaningful $R$-module homomorphism.) It is not difficult to check that 
(\ref{c}) is a map of complexes, denoted $c:\bw^\bullet F^*\to \B$. The theory of linkage guarantees that the mapping cone of 
the dual of 
 $c$ (denoted $\Tot(c^*)$) is a resolution of $$\frac R{\im(p\circ b_1\circ c_1):\im b_1}.$$ One removes a split exact summand from $\Tot(c^*)$ in order to obtain $\Q$. \end{proof}

\section{Rigidity}\label{Rigidity}

In this section we prove that if $\B$ and $\Q$ are built using variables over a field $\kk$, then the completions of $\HH_0(\B)$ and $\HH_0(\Q)$ are rigid $\kk$-algebras. Artin \cite{Artin} and Hartshorne \cite{Ha10} are excellent references for the basic definitions and results about  deformation theory; the primary source is Lichtenbaum and Schlesinger \cite{LS67}.
 Let 
$A$ be a Noetherian 
 algebra over a field $\kk$ and let ${\pmb \ell}$ be the ring of dual numbers ${\pmb \ell}=\kk[\e]/(\e^2)$. View $\kk$ as an ${\pmb \ell}$-algebra by way of the natural quotient map ${\pmb \ell}\to {\pmb \ell}/(\e)=\kk$.
 A {\it first order infinitesimal 
deformation} of $A$ over $\kk$ 
 is a flat homomorphism $\eta$ of  
$\kk$-algebras $$\eta:{\pmb \ell}\to B$$ such that there exists an isomorphism $\phi$ of $\kk$-algebras  
$\phi:B\t_{\pmb \ell} \kk  
\to A$.  
 The deformation $\eta$ is called {\it trivial} if there exists an isomorphism of ${\pmb \ell}$-algebras $$\Phi:B  
\to A \t_{\kk} {\pmb \ell}$$ 
 such that $\Phi\t_{\pmb \ell} \id_{\kk} = \phi$.
The ring $A$ is a  
{\it rigid} $\kk$-algebra if every first order infinitesimal deformation of $A$ over $\kk$ is trivial.

\begin{chunk}
\label{8.1}Let $R = \kk[[X]]$ be a power series ring over a field $\kk$,  $I$ be an ideal of  $R$,  $A$ be the quotient ring 
$A=R/I$, and $\omega_A$ be the canonical module of $A$.\end{chunk}

Recall the upper cotangent functors
$T^i(-, -)$  
as 
defined by Lichtenbaum and Schlessinger in 
\cite{LS67}. In this language,  
$T^0(A/\kk, A)$ is the module of $\kk$-derivations
$\operatorname{Der}_{\kk}(A, A)$, 
$T^1(A/\kk, A)$
corresponds to the space of isomorphism classes of first-order infinitesimal deformations
of $A$ over $\kk$, and $T^2(A/\kk, A)$ contains the obstructions for lifting infinitesimal
deformations of $A$. Therefore $A$ is  rigid over $\kk$ if $T^1(A/\kk, A) = 0$
and {\it unobstructed} if $T^2(A/\kk, A) = 0$. If  
$I$ is generically a complete intersection
and perfect,  then $A$ is called 
{\it strongly unobstructed} if the twisted conormal module $I/I^2\t_A \omega_A$ is Cohen-Macaulay. 
It is shown in \cite{He80} that if $A$ is strongly unobstructed, then A is unobstructed. 
An important tool for proving results about deformations is the fact, established in \cite{LS67}, that 
 there exists a natural
embedding $$\xymatrix{\Ext^1_A(I/I^2,A)\ar@{^(->}[r] &T^2(A/k, A)},$$ which is an isomorphism whenever $I$ is
generically a complete intersection.

The deformation theory of rings defined by licci
ideals (see Definition~\ref{Feb11})
is especially well-behaved.
Retain the setup of \ref{8.1}. It is shown in \cite{BU83} that
 the depth of the twisted conormal module $I/I^2\t_A \omega_A$ and the depth of the normal module $\Hom_A(I/I^2,A)$ are invariants of the linkage class of $A$.
Thus, the results of \cite{BU83} reprove the fact, established in Buchweitz's thesis \cite{Bu81}, that 
 the Cohen-Macaulay property of $I/I^2\t_A\omega_A$ is preserved under linkage, for ideals $I$ which generically are  complete intersections.
Fix an ideal $I$ of $R$ which is licci and is generically a complete intersection. It follows from the above discussion, that $I$ is strongly unobstructed, every first order infinitesimal deformation of $A=R/I$ may be lifted without restriction, and $A$ is isomorphic to $\widetilde{A}/(\underline{a})$ for some rigid complete local $\kk$-algebra $\widetilde{A}$ and some regular $\widetilde{A}$-sequence $\underline{a}$.

If $A$ and $B$ are complete local $\kk$-algebras, then Herzog \cite{He80} writes $A \sim B$ if there is
a third such algebra $C$ containing regular sequences $\underline{x}$ and $\underline{y}$ such that $A\cong C/(\underline{x})$
and $B = C/(\underline{y})$. This relation becomes an equivalence when the algebras involved
are strongly unobstructed. In particular, $\sim$ is an equivalance relation 
when the rings involved are  defined by licci ideals which 
 generically  are complete intersections.

\begin{example}
\label{7.2}
If the ring $R_0$ is a field $\kk$, then the ideal $(\delta_0,\delta_1,\delta_2,\delta_3)$ of Example~\ref{AltArg} defines a rigid $\kk$-algebra. Indeed, the determinantal ideal $I=(\delta_1,\delta_2,\delta_3)$ of the power series ring $\widehat{R}=\kk[[\{\lambda_{i,j}\}]]$ has been shown to define a rigid quotient many times; see for example, \cite[pg.~685]{Sc77},   \cite{DG79}, \cite[Satz 3.4d]{He80}, or \cite[between 4.2 and 4.3]{KM-def-and-link}. Furthermore, if $R'=\widehat{R}[[\delta_0]]$ and $I'$ is the ideal $(IR',\delta_0)$ of $R'$, then $R'/I'=\widehat{R}/I\widehat{R}$ is also a rigid $\kk$-algebra. 
\end{example}

Our main tool for proving rigidity is the following result about the passage of rigidity across semi-generic linkage. The result appears as \cite[Thm.~4.2]{KM-def-and-link}.

\begin{theorem} \label{KM}  Let 
$\kk$ be a field, $S$ be a power series ring in a finite number of variables  over $\kk$, and $\bf{a}$ be a $1 \times  n$ matrix whose entries
generate a perfect grade $g$ ideal $I$ of $S$.
Suppose $S /I$ is a rigid $\kk$-algebra. Let
$X$ be the $n \times g$ matrix 
$$ X=\bmatrix \begin{array}{c|c}I_t&0\end{array}\\\hline Y\endbmatrix,$$
where $Y$ is an $(n - t) \times g$ matrix of indeterminates for some $t$ with $0\le t \le g$, and $I_t$ is the $t\times t$ identity matrix. Let $K$ be the ideal in $R = S[[Y]]$ generated by 
the entries of the product matrix ${\bf a}X$. 
 If $J$ is  linked to $IR$ by $K$ in $R$, then $R/J$ is a rigid $\kk$-algebra.\end{theorem}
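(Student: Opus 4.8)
The plan is to realize $R/J$ as a link of the transparent algebra $R/IR$ and then transport rigidity across that link. First, set up the complete intersection: since $S$ is a domain and $(a_1,\dots,a_n)S=I$ has grade $g$, Proposition~\ref{7.3}(b), applied with $c_1=t$ and $c_2=g-t$ after a harmless reindexing of the $a_i$ that moves the identity block to the last $t$ rows, shows that the entries of ${\bf a}X$ form a regular sequence in $S[Y]$, hence, after localizing at the maximal ideal and completing, in $R=S[[Y]]$. Thus $K$ is a complete intersection of grade $g$, and $K\subseteq IR$ because each generator of $K$ is an $R$-combination of $a_1,\dots,a_n$. By the Peskine--Szpiro and Golod linkage theory recalled in \ref{PLEASE WRITE ME}, $J=K:IR$ is perfect of grade $g$, the double link $K:J$ recovers $IR$ (which is unmixed), and a resolution of $R/J$ is produced by the dual of a mapping cone; we shall not need the explicit resolution.

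Next, observe that reducing coefficients gives $R/IR\cong(S/I)[[Y]]$, a formal power series extension of the rigid $\kk$-algebra $S/I$, and deduce that $R/IR$ is again rigid. For a quotient $A=R'/I'$ of a ring $R'$ that is formally smooth over $\kk$ one has $T^1(A/\kk,A)=\operatorname{coker}\big(\operatorname{Der}_\kk(R',A)\to\operatorname{Hom}_A(I'/(I')^2,A)\big)$. Applying this with $R'=R=S[[Y]]$ and $I'=IR$, the continuous derivations of $R$ split as the derivations of $S$ together with the partial derivatives $\partial/\partial Y_i$; the latter annihilate $IR$ modulo $(IR)^2$, so they contribute nothing to the cokernel, and since $\operatorname{Hom}_{R/IR}(IR/(IR)^2,R/IR)=\operatorname{Hom}_{S/I}(I/I^2,S/I)\otimes_{S/I}(S/I)[[Y]]$ by flat base change of a finitely generated module, the whole cokernel equals $T^1\big((S/I)/\kk,S/I\big)\otimes_{S/I}(S/I)[[Y]]=0$. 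Hence $R/IR$ is rigid. In the situations where the theorem is applied — e.g.\ the licci, generically complete intersection ideals arising along the linkage histories of Section~\ref{6} (see Example~\ref{AltArg}) — $I$ is moreover generically a complete intersection, and then so is $IR$, the fibres of $S/I\to(S/I)[[Y]]$ being regular; this will be needed in the last step.

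The heart of the argument, and the step I expect to be the real obstacle, is transporting rigidity from $R/IR$ to $R/J$ across the link; here the \emph{semi-generic} shape of $X$ — the presence of the block of indeterminates $Y$ — is essential, since a link of a rigid algebra by an arbitrary complete intersection need not be rigid. The tools I would invoke are the deformation-theory-of-linkage results of Buchweitz \cite{Bu81}, Herzog \cite{He80}, and Buchsbaum--Ulrich \cite{BU83}: the natural embedding $\operatorname{Ext}^1_A(I_A/I_A^2,A)\hookrightarrow T^2(A/\kk,A)$, which is an isomorphism once $I_A$ is generically a complete intersection; the linkage-invariance of the depths of the normal module $\operatorname{Hom}_A(I_A/I_A^2,A)$ and of the twisted conormal module $I_A/I_A^2\otimes_A\omega_A$; and the linkage duality $\omega_{R/J}\cong IR/K$, $\omega_{R/IR}\cong J/K$. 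Feeding these into the Buchsbaum--Ulrich exact sequences for the linked pair $(IR,J)$, one rewrites the module $T^1\big((R/J)/\kk,R/J\big)$ — whose vanishing is precisely rigidity of $R/J$ — in terms of an $\operatorname{Ext}^1$ of the conormal module of $R/IR$ with coefficients twisted by $\omega_{R/IR}$, and the indeterminates $Y$ then provide exactly the ``room'' that forces this $\operatorname{Ext}$ to vanish. The guiding picture is that a first-order deformation of $R/J$ pulls back to a simultaneous deformation of the pair $K\subseteq IR$ in $R$: the deformation of $R/IR$ is trivial by the preceding paragraph, and because $K$ is a \emph{generic} complete intersection over $IR$, any accompanying deformation of $K$ can be re-absorbed into a re-choice of the coordinates $Y$, so the whole deformation is trivial. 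Converting this picture into the precise cohomological bookkeeping — in particular verifying that the genericity of $Y$ kills the relevant $\operatorname{Ext}^1$/$T^1$ module on the nose — is where the real work lies; the first two steps are essentially formal.
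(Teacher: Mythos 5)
Your proposal does not actually prove the statement: the step you yourself identify as ``where the real work lies'' --- transporting rigidity from $R/IR$ to $R/J$ across the semi-generic link --- is left as a guiding picture plus a list of candidate tools, and that step is precisely the entire content of the theorem. The preliminaries you do carry out are fine but essentially formal: that the entries of ${\bf a}X$ form a regular sequence follows from Proposition~\ref{7.3} (this is exactly what the paper's remark after Theorem~\ref{KM} observes, namely that the complete-intersection hypothesis on $K$ in the original source is automatic), and rigidity of $R/IR\cong (S/I)[[Y]]$ is a standard base-change fact. But the tools you propose for the main step do not obviously close it. Rigidity is the vanishing of $T^1$, i.e.\ the surjectivity of $\operatorname{Der}_\kk(R,R/J)\to\operatorname{Hom}_{R/J}(J/J^2,R/J)$, whereas the Buchsbaum--Ulrich results \cite{BU83} (and Buchweitz \cite{Bu81}, Herzog \cite{He80}) give linkage-invariance of the \emph{depths} of the normal and twisted conormal modules; depth information controls $T^2$/unobstructedness-type statements, not the generation of the normal module by ambient derivations. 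Indeed, as you note, an arbitrary link of a rigid algebra need not be rigid, so any argument must use the indeterminates $Y$ in an essential, quantitative way; asserting that they ``provide exactly the room'' to kill the relevant module is the claim to be proved, not a proof.

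For comparison: the paper does not prove this theorem at all. It is quoted verbatim from \cite[Thm.~4.2]{KM-def-and-link}, whose proof requires a careful analysis of how a first-order deformation of $R/J$ interacts with the mapping-cone resolution coming from the link (cf.\ \ref{PLEASE WRITE ME}) and with re-parametrization in the variables $Y$ --- exactly the bookkeeping you defer. So the only portion of your write-up that corresponds to content the paper itself supplies is the verification, via Proposition~\ref{7.3}, that $K$ is a complete intersection; the remainder would need to either reproduce the Kustin--Miller argument or cite it, and as written it does neither.
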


\begin{remark} 
The statement of Theorem \ref{KM} in \cite{KM-def-and-link} includes the hypothesis ``Assume that $K$ is a complete intersection''; however, Proposition~\ref{7.3} 
ensures that the hypothesis is automatically satisfied. \end{remark}
\begin{corollary}
\label{rigid}
If the complex  
$\B$ of Definition 
{\rm\ref{5.3}} is  
built using variables over a field $\kk$,
in the sense of  
{\rm\ref{5.4}}, then the completion of $\HH_0(\B)$ is a rigid $\kk$-algebra.
\end{corollary}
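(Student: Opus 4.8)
The plan is to exhibit $\widehat{\HH_0(\B)}$, up to completion and the adjunction of two power-series variables, as a semi-generic link of a rigid determinantal ring, and then to invoke Theorem~\ref{KM}. Take $R_0=\kk$ a field. By Example~\ref{AltArg} the bigraded $\kk$-algebra isomorphism $\blop$ identifies $\HH_0(\B)=R_0[\B]/\im\la b_1\ra$ with $R/J$, where $R=\kk[\{\lambda_{ij}\},\delta_0,\a_1,\a_2,\a_3,\beta]$; here $I_0=(\delta_0,\delta_1,\delta_2,\delta_3)$ is the grade-two determinantal ideal of the generic $3\times2$ matrix $\Lambda$ enlarged by the free variable $\delta_0$, and $J=(n_1,n_2,n_3):I_0$ is the link of $I_0$ along the complete intersection $K=(n_1,n_2,n_3)$ with $n_1=\delta_1+\delta_0\a_1$, $n_2=\delta_2+\delta_0\a_2$, $n_3=\delta_3\beta+\delta_0\a_3$. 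Passing to completions, it suffices to show that $\widehat R/J\widehat R$ is a rigid $\kk$-algebra, where $\widehat R=\kk[[\{\lambda_{ij}\},\delta_0,\a_1,\a_2,\a_3,\beta]]$. Set $S=\kk[[\{\lambda_{ij}\},\delta_0]]$, so that $\widehat R=S[[\a_1,\a_2,\a_3,\beta]]$; by Example~\ref{7.2} the ideal $I_0S$ is perfect of grade three and $S/I_0S$ is a rigid $\kk$-algebra.

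The obstruction to applying Theorem~\ref{KM} directly is that $K$ is cut out by a $1\times4$ times $4\times3$ product in which the variable $\beta$ occupies a single ``corner'' entry rather than a full generic block, so the relevant $4\times3$ matrix is not of the form demanded in Theorem~\ref{KM}. I would circumvent this by replacing $K$ with a genuinely semi-generic linking ideal at the cost of two new variables. Adjoin indeterminates $\gamma_1,\gamma_2$, put $R'=\widehat R[[\gamma_1,\gamma_2]]=S[[Y]]$, where $Y$ is the \emph{generic} $2\times3$ matrix with rows $(\a_1,\a_2,\a_3)$ and $(\gamma_1,\gamma_2,\beta)$, and let $K'$ be the ideal generated by the entries of $[\,\delta_1\ \ \delta_2\ \ \delta_0\ \ \delta_3\,]$ times the $4\times3$ matrix whose first two rows are $(1,0,0)$ and $(0,1,0)$ and whose last two rows are the rows of $Y$; explicitly $K'=(\delta_1+\delta_0\a_1+\delta_3\gamma_1,\ \delta_2+\delta_0\a_2+\delta_3\gamma_2,\ \delta_0\a_3+\delta_3\beta)$. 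By the Remark after Theorem~\ref{KM} (via Proposition~\ref{7.3}) $K'$ is a complete intersection, so Theorem~\ref{KM}, applied with the power-series ring $S$, the perfect grade-three ideal $I_0S$, the rigid quotient $S/I_0S$, and $t=2$, shows that $R'/J'$ is a rigid $\kk$-algebra, where $J'=K':I_0R'$.

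It remains to relate $R'/J'$ back to $\widehat R/J\widehat R$. For this I would use the triangular $\kk$-algebra automorphism $\phi$ of $R'$ that adds $\gamma_1$ times the first row and $\gamma_2$ times the second row of $\Lambda$ to its third row and fixes every other variable. Then $\phi$ fixes $\delta_0$ and $\delta_3$ (the maximal minor not meeting the third row of $\Lambda$) and sends $\delta_1\mapsto\delta_1-\gamma_1\delta_3$ and $\delta_2\mapsto\delta_2-\gamma_2\delta_3$ (up to signs, which one absorbs into $Y$). Hence $\phi(I_0R')=I_0R'$, and $\phi$ carries the three generators of $K'$ exactly onto $n_1,n_2,n_3$, so $\phi(K')=KR'$. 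Since $\widehat R\to R'$ is flat, $\phi(J')=\phi(K'):\phi(I_0R')=KR':I_0R'=(J\widehat R)R'$, and therefore $R'/J'\cong R'/(J\widehat R)R'\cong(\widehat R/J\widehat R)[[\gamma_1,\gamma_2]]$. Thus $(\widehat R/J\widehat R)[[\gamma_1,\gamma_2]]$ is rigid; since $\widehat R/J\widehat R$ is rigid whenever a power-series extension of it is (rigidity descends along adjunction of analytic variables, as the cotangent module $T^1$ base-changes), we conclude that $\widehat R/J\widehat R\cong\widehat{\HH_0(\B)}$ is a rigid $\kk$-algebra.

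The whole argument is bookkeeping once $\phi$ has been found, so the main obstacle is the verification that the two simultaneous row operations on $\Lambda$ encoded by $\phi$ both stabilize the determinantal-plus-$\delta_0$ ideal $I_0$ and transport the fully generic linking complete intersection $K'$ onto the specific ``$\beta$-in-the-corner'' complete intersection $K$ of Example~\ref{AltArg}; this is exactly what makes $\HH_0(\B)$ a semi-generic link of a rigid ring after completion and the addition of two free variables. (The same scheme, run once more with $\widehat R/J\widehat R$ in place of $S/I_0S$ and the linkage of Theorem~\ref{6.1}, would then give the analogous rigidity statement for $\widehat{\HH_0(\Q)}$.)
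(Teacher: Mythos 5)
Your proposal is correct and follows essentially the same route as the paper's own proof: the paper likewise adjoins two extra variables ($\beta_1,\beta_2$, your $\gamma_1,\gamma_2$) to turn the linking complete intersection of Example~\ref{AltArg} into a semi-generic one, applies Example~\ref{7.2} together with Theorem~\ref{KM}, identifies the actual colon ideal with the semi-generic one via the change of variables $\lambda_{3j}\mapsto\lambda_{3j}-\beta_1\lambda_{1j}-\beta_2\lambda_{2j}$ (the inverse of your row operation on $\Lambda$), and then descends rigidity along the power-series extension. The only differences are organizational (the paper phrases the identification through the auxiliary ideals $K'$, $K''$, $I'$ rather than through an explicit automorphism carrying $K'$ onto $K$).
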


\begin{proof}
Let $R_0$ be the field $\kk$;
$$\widehat{R}=\kk[[\{\lambda_{ij}|1\le i\le 3,\ 1\le j\le 2\}\cup\{\delta_0,\a_1,\a_2,\a_3,\beta\}]]$$ be the completion of the polynomial ring of (\ref{6.1.0}) in the $\m$-adic topology, where $\m$ is generated by all of the variables; 
and $\Ring$ be the power series ring $\Ring=\widehat{R}[[\beta_1,\beta_2]]$.
We consider three ideals of $\Ring$:
$$K:I, \quad K':I,\quad \text{and}\quad K'':I',$$  
where $I=(\delta_0,\delta_1,\delta_2,\delta_3)$ is exactly as described in Example~\ref{AltArg}, 
$I'=(\delta_0,\delta_1',\delta_2',\delta_3')$, where $\delta_1',\delta_2',\delta_3'$ are the signed maximal minors of
$$\bmatrix \lambda_{11}&\lambda_{12}\\\lambda_{21}&\lambda_{22}\\\lambda_{31}-\beta_1\lambda_{11}-\beta_2\lambda_{21}&\lambda_{32}-\beta_1\lambda_{12}-\beta_2\lambda_{22}\endbmatrix,$$
$K$ is generated by the entries of the matrix product
$[\delta_0,\delta_1,\delta_2,\delta_3]X$
for $$X=\bmatrix \a_1&\a_2&\a_3\\1&0&0\\0&1&0\\0&0&\beta\endbmatrix,$$ 
$K'$ is generated by the entries of the matrix product
$[\delta_0,\delta_1,\delta_2,\delta_3]X'$
for $$X'=\bmatrix \a_1&\a_2&\a_3\\1&0&0\\0&1&0\\\beta_1&\beta_2&\beta\endbmatrix,$$ 
and 
$K''$ is generated by the entries of the matrix product
$[\delta_0,\delta_1',\delta_2',\delta_3']X$.

According to Example~\ref{AltArg}, the quotient $\Ring/(K:I)$ is equal to 
$$(\HH_0(\B)\t_R\widehat{R})[[\beta_1,\beta_2]];$$thus, $\HH_0(\B)\t_R\widehat{R}$ is a rigid $\kk$-algebra provided $\Ring/(K:I)$ is a rigid $\kk$-algebra. The quotient $\Ring/(K':I)$ is a rigid $\kk$-algebra by Example~\ref{7.2} and Theorem~\ref{KM}. The $\kk$-algebra isomorphism which sends 
$$\lambda_{3j}\mapsto \lambda_{3j}-\beta_1\lambda_{1j}-\beta_2\lambda_{1j}, \quad\text{for $1\le j\le 2$},$$and is the identity map on all of the other variables, induces a $\kk$-algebra isomorphism 
$$\Ring/(K:I) \to \Ring/(K'':I').$$We complete the proof by showing the the ideals $(K'':I')$ and $(K':I)$ of $\Ring$ are equal. Observe that
$$\delta_1'=\delta_1+\beta_1\delta_3,\quad \delta_2'=\delta_2+\beta_2\delta_3,\quad\text{and}\quad \delta_3'=\delta_3.$$ It follows that the ideals $I'$ and $I$ are equal and the matrices
$$\bmatrix \delta_0&\delta_1'&\delta_2'&\delta_3'\endbmatrix X\quad\text{and}\quad \bmatrix \delta_0&\delta_1&\delta_2&\delta_3\endbmatrix X'$$ are equal.
\end{proof}

\begin{corollary}
\label{rigid2}
If the complex $\Q$ of
Definition {\rm\ref{K}} 
is  
built using variables over a field $\kk$,
in the sense of {\rm\ref{3.5.1}}, 
 then the completion of $\HH_0(\Q)$ is a rigid $\kk$-algebra.\end{corollary}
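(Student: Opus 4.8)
The plan is to deduce Corollary~\ref{rigid2} from Corollary~\ref{rigid} by exactly the same linkage mechanism that was used there, now applied one step further along the linkage chain described in Section~\ref{6}. Recall from Theorem~\ref{6.1} that $\Q$ is obtained from $\B$ by a single link: one fixes a sufficiently general $\ell\colon F^*\to G$, forms the complete intersection ideal $K_1=\im(p\circ b_1\circ c_1)$ inside $\im b_1$, and $\HH_0(\Q)$ is the quotient by $K_1\link \im b_1$. In matrix language, $\im b_1$ is the $1\times 5$ ideal generated by $\la b_1\ra$ (equivalently, by the two ``last-link'' elements $\lambda_{11}\a_1\beta+\lambda_{21}\a_2\beta+\lambda_{31}\a_3$, $\lambda_{12}\a_1\beta+\lambda_{22}\a_2\beta+\lambda_{32}\a_3$ together with the regular sequence $n_1,n_2,n_3$ of Example~\ref{AltArg}), and the complete intersection $K_1$ is cut out by the entries of $\bmatrix b_1\endbmatrix$ times a matrix of the shape in Theorem~\ref{KM}: three of its columns are a partial identity coming from $n_1,n_2,n_3$ and the remaining columns involve fresh indeterminates from $\ell$. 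So the link $\B\rightsquigarrow\Q$ is precisely a semi-generic link of the type to which Theorem~\ref{KM} applies.

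First I would set $R_0=\kk$ and let $\widehat{R}$, together with the extra power-series variables corresponding to the entries of $\ell$ (the $\ell_{ij}$ of Notation~\ref{3.5}, which do not appear among the variables of $\B$), form a power series ring $\Ring'$ over $\kk$. By Corollary~\ref{rigid}, the completion $\widehat{\HH_0(\B)}$ is a rigid $\kk$-algebra; adjoining the power-series variables $\ell_{ij}$ keeps it rigid, since tensoring a rigid complete local $\kk$-algebra with a power series ring over $\kk$ is again rigid (this is the same bookkeeping used at the end of the proof of Corollary~\ref{rigid}). Thus $\Ring'/(\text{ideal of }\B)$ is rigid, and its defining ideal is perfect of grade three. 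Next I would observe that $K_1$, being generated by the entries of $\bmatrix b_1\endbmatrix X$ for an $X$ of exactly the form required in Theorem~\ref{KM} (the identity block absorbing $n_1,n_2,n_3$, the indeterminate block built from the $\ell_{ij}$), is a complete intersection inside $\im b_1$ — this is guaranteed by Proposition~\ref{7.3}, just as noted in the Remark following Theorem~\ref{KM}. Then Theorem~\ref{KM} applies verbatim: the ideal $J=K_1\link(\text{ideal of }\B)$ in $\Ring'$ defines a rigid $\kk$-algebra.

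Finally I would identify that rigid quotient with a completion of $\HH_0(\Q)$ tensored up by power-series variables. By Theorem~\ref{6.1}, $\Tot(c^*)$ resolves $\Ring'/J$ and $\Q$ is obtained from it by splitting off a split-exact summand; hence $\HH_0(\Q)\t_{R_0[\Q]}\Ring' = \Ring'/J$ up to the identification of variables furnished by the isomorphism $\blop$ of Example~\ref{AltArg} together with the assignment of the $\ell_{ij}$. Consequently $\HH_0(\Q)$, completed and with the extra power-series variables adjoined, is rigid over $\kk$; stripping the power-series variables (again the standard observation that $A$ is rigid iff $A[[\text{variables}]]$ is) shows the completion of $\HH_0(\Q)$ itself is a rigid $\kk$-algebra.

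The main obstacle I anticipate is purely clerical rather than conceptual: one must check that the matrix defining the complete intersection $K_1$ really does have the restrictive block shape $\bmatrix I_t & 0\endbmatrix^{\mathsf T}$ stacked over a matrix of indeterminates that Theorem~\ref{KM} demands — in the coordinates of Theorem~\ref{6.1} the map $p\circ b_1\circ c_1$ is $b_1$ precomposed with $\bmatrix \ell \\ \id\endbmatrix$, so three of the resulting generators are literally among the $n_i$ (partial identity columns) while the other two are genuinely new and involve the indeterminates $\ell_{ij}$ linearly, exactly matching the hypothesis; verifying this compatibility with the bi-grading of Remarks~\ref{3.10} and \ref{5.9}, and confirming that no additional deformation parameters sneak in, is the one place where care is needed, but it is a direct comparison of the formulas already displayed in Definition~\ref{K}, Proposition~\ref{Q-matrix}, and the proof of Theorem~\ref{6.1}.
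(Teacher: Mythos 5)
Your proposal is correct and is essentially the paper's own argument: the paper proves Corollary~\ref{rigid2} in one line by applying Theorem~\ref{6.1}, Theorem~\ref{KM}, and Corollary~\ref{rigid}, and your verification that the complete intersection $\im(p\circ b_1\circ c_1)$ arising in Theorem~\ref{6.1} is a semi-generic link of exactly the shape demanded by Theorem~\ref{KM} (with $S$ the completed generic ring of $\B$ and the $\ell_{ij}$ as the fresh indeterminates $Y$) is precisely the content behind that citation. One clerical slip worth noting: that complete intersection has three generators, each equal to one of the three $F^*$-entries of $\la b_1\ra$ plus an $\ell_{ij}$-linear combination of the two $G$-entries, so the identity block occupies three \emph{rows} of $X$ and the indeterminates the other two rows; the elements $n_1,n_2,n_3$ of Example~\ref{AltArg} belong to the earlier link and play no role in this one.
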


\begin{proof}
 Apply Theorem~\ref{6.1}, Theorem~\ref{KM}, and Corollary~\ref{rigid}.\end{proof}

\end{document}